\newtheorem{theorem}{theorem}[section] 
\newtheorem{corollary}[theorem]{Corollary} 
\newtheorem{lemma}[theorem]{Lemma} 
\newtheorem{proposition}[theorem]{Proposition} 
\theoremstyle{definition}
\newtheorem{remark}[theorem]{Remark}  
\newtheorem{question}[theorem]{Question}  
\newtheorem*{ack}{Acknowledgements}
\numberwithin{equation}{section}  
 \DeclareMathOperator{\edim}{edim}
\DeclareMathOperator{\card}{card}
\DeclareMathOperator{\Ker}{Ker}
\DeclareMathOperator{\Ass}{Ass}
 \DeclareMathOperator{\Ext}{Ext}
 \DeclareMathOperator{\Tor}{Tor}
 \DeclareMathOperator{\pdim}{pdim}
\DeclareMathOperator{\depth}{depth}
\DeclareMathOperator{\reg}{reg}
\DeclareMathOperator{\rank}{rank}
\newcommand{\fp}{\frak{p}}
\newcommand{\bbZ}{\mathbb{Z}}
\newcommand{\ee}{\varepsilon}
\newcommand{\bsf}{{\boldsymbol f}}
\newcommand{\bsz}{{\boldsymbol z}}
\newcommand{\lar}{\longrightarrow}
\newcommand{\xra}{\xrightarrow}
\newcommand{\ges}{\geqslant}
\newcommand{\les}{\leqslant}
\title{Koszul Algebras Defined by Three Relations}
\author[A.Boocher]{Adam Boocher}
\address{University of Utah, Salt Lake City, Utah, USA}
\email{boocher@math.utah.edu and  aboocher@gmail.com}
\author[S. H.~Hassanzadeh]{S. Hamid Hassanzadeh}
\address{Instituto de Matem\'{a}tica,
Universidade Federal do Rio de Janeiro,  Brazil. }
\email{hamid@im.ufrj.br and  hassanzadeh.ufrj@gmail.com}
\author[S. B. Iyengar]{Srikanth B.  Iyengar}
\address{University of Utah, Salt Lake City, Utah, USA}
\email{iyengar@math.utah.edu}
\begin{document}
\begin{abstract}
This work concerns commutative algebras of the form $R=Q/I$, where $Q$ is a standard graded polynomial ring and $I$ is a homogenous ideal in $Q$.  It has been proposed that when $R$ is Koszul the $i$th Betti number of $R$ over $Q$ is at most $\binom gi$, where $g$ is the number of generators of $I$; in particular, the projective dimension of $R$ over $Q$ is at most $g$. The main result of this work settles this question, in the affirmative, when $g\le 3$. 
\end{abstract}

\keywords{Betti numbers, Koszul algebra, regularity}
\subjclass[2010]{13D02(primary); 16S37 (secondary)}

\date{2nd May 2017}

\maketitle
\maketitle

%
%

\begin{center}
\emph{To Professor Winfried Bruns, on the occasion of his 70th birthday.}
\end{center}

\section{Introduction}
This work is about the homological properties of homogeneous affine algebras, that is to say, algebras $R$ of the form $Q/I$ where $Q=k[x_{1},\dots,x_{e}]$, with each $x_{i}$ of degree one, and $I$ is a homogenous ideal in $Q$. The emphasis is on \emph{Koszul} algebras: algebras $R$ with the property that  $\Tor^{R}_{i}(k,k)_{j}=0$ whenever $i\ne  j$; equivalently, the minimal graded free resolution of $k$ over $R$ is linear. We are interested in the connection between the Koszul property of $R$ and invariants of $R$ as a $Q$-module; in particular, the graded Betti numbers, namely, the numbers $\beta_{i,j}^{Q}(R):=\rank_{k}\Tor^{Q}_{i}(R,k)_{j}$, and the total Betti numbers $\beta^{Q}_{i}(R):=\sum_{j}\beta^{Q}_{i,j}(R)$.

It has long been known that there is such connection:  Backelin~\cite{BThesis} and Kempf~\cite{Ke} proved that when $R$ is Koszul $\beta_{i,j}^{Q}(R)=0$ when $j>2i$. Said otherwise, in the Betti table of $R$ viewed as an $Q$-module, the nonzero entries all lie on or above the diagonal line.  In \cites{ACI, ACI1} these results have been refined to obtain more stringent constraints on the shape of this Betti table. 
Our focus is on the Betti numbers themselves and in particular the following intriguing question formulated in ~\cite{ACI}.

\begin{question}
\label{qu:conca}
Set $g:=\beta^{Q}_{1}(R)$, the number of relations defining $R$. If $R$ is Koszul algebra, are there inequalities
\[
\beta^Q_i(R) \leq {\binom gi} \quad \text{for each $i$?} 
\]
In particular, is the projective dimension of $R$ over $Q$ is at most $g$? 
\end{question}

This conjecture holds when the ideal $I$ of relations can be generated by monomials, for then the Taylor resolution of $R$ over $Q$ furnishes the desired bounds. It follows by standard arguments that the same is true when $I$ has a Gr\"obner basis of quadrics; or even it has one after a suitable deformation; see Remark~\ref{re:Taylor}. As far as we are aware, no other families of Koszul algebras are known that satisfy this conjecture. On the other hand, it would be remiss of us not to mention that the only known example of a Koszul algebra that does not have a Gr\"obner basis of quadrics is  \cite{Co}*{Example~1.20}.

The main result of this work is that Question~\ref{qu:conca} has an affirmative answer when $g \leq 3$; in particular, Koszul algebras defined by 3 equations have projective dimension at most $3$; see Corollary~\ref{co:conca}. Combining this with earlier work of Backelin and Fr\"oberg~\cite{BF} and Conca~\cite{C2000}, we arrive at a similar conclusion when the embedding dimension of $R$ is at most $3$; see Remark~\ref{re:conca}.

This may seem scant evidence for an affirmative answer to Question~\ref{qu:conca} but there exist ideals  $I$ generated by three quadrics with  $\pdim_{Q}(Q/I)=4$, and that is the largest it can be, by a result of Eisenbud and Huneke; see~\cite{MS}. So the Koszul property is reflected already in this special case. Another reason the preceding result is not without interest is that, by a result of Bruns~\cite{Br2},  essentially every free resolution over $Q$ is  the free resolution of an ideal that can be generated by three elements; these will not, in general, be quadrics. 

Our proof of Corollary~\ref{co:conca} is based on some general results concerning the \emph{multiplicative structure} of $\Tor^{Q}_{*}(R,k)$, which is nothing but the Koszul homology algebra of $R$. We prove that when $R$ is Koszul, the diagonal $k$-subalgebra $\oplus_{i} \Tor^{Q}_{i}(R,k)_{2i}$ is the quotient of the exterior algebra on $\Tor^{Q}_{1}(R,k)_{2}$, modulo quadratic relations that depend only on the first syzygies of $I$; see Theorem~\ref{th:delta} and Remark~\ref{re:delta}. One consequence of this is that, for Koszul algebra, one has $\beta^{Q}_{i,2i}(R)\le \binom gi$ for all $i$. Moreover,  if equality holds for \emph{some} $2\le i\le g$ then $R$ is a complete intersection; the proof of this latter also uses a characterization of complete intersections in terms of the product in the Koszul homology algebra, due to Bruns~\cite{Br2}. These results are proved in Section~\ref{se:koszul-homology}. The arguments exploit the machinery of minimal models of algebras, developed by Avramov~\cite{Av1}. The relevant details are recalled in Section~\ref{se:models}, where they are also used to establish results on almost complete intersection rings, which also play a key role in the proof of our main result.

\begin{center}
  $\ast \ \ \ast \ \ \ast$
\end{center}
\noindent
Throughout the paper $k$ will be a field and $R:=\{R_{i}\}_{i\ges 0}$ a standard graded finitely generated $k$-algebra; in other words, $R$ is generated as a $k$-algebra by $R_{1}$ and  $\rank_{k}R_{1}$ is finite. Let $Q$ be the symmetric $k$-algebra on $R_{1}$ and $Q\to R$ the canonical projection. In particular, $Q$ is a standard graded finitely generated polynomial ring and the ideal $I:=\Ker(Q\to R)$ is homogenous and contained in $Q_{\ges 2}$.

\section{Betti numbers and deviations}
\label{se:models}
In this section we recollect the construction of the minimal model of $R$,  and certain numerical invariants based on them; namely the deviations of $R$ and the Betti numbers of $R$ over $Q$. There is little new, except Theorem~\ref{th:aci}. 

Since $Q\to R$ is a morphism of graded rings, resolutions of $R$ over $Q$, and invariants based on them, have  an \emph{internal} degree induced by the grading on $Q$, in addition to the usual  homological degree. In what follows, given an element $a$ in such a bigraded object we write $\deg(a)$ for the internal degree and $|a|$ for the homological degree; the tacit assumption is that only homogeneous elements of graded objects are considered.

\subsubsection*{Minimal models} 
Let $Q[X]$ be a \emph{minimal model} for $R$ over $Q$. Thus, $X$ is a bigraded set such that for each $n$, the set $X_{n}:=\{x\in X\mid |x|=n\}$ is finite and the graded algebra underlying $Q[X]$ is $Q\otimes_{k}\bigotimes_{n=1}^{\infty}k[X_{n}]$ where $k[X_{n}]$ is the symmetric algebra on the graded $K$-vector space $kX_{n}$ when $n$ is even, and the exterior algebra on that space when $n$ is odd. 
In particular, $Q[X]$ is strictly graded-commutative with respect to homological degree: For $a,b$ in $Q[X]$ one has
\[
ab = (-1)^{|a||b|}ba \quad \text{and}\quad a^{2} = 0 \quad\text{if $|a|$ is odd}.
\]
The differential on $Q[X]$ satisfies the Leibniz rule and is \emph{decomposable}, in that
\begin{equation}
\label{eq:diff-model}
d(x)\subseteq (Q_{\ges 1}+(X))^{2}Q[X] \quad\text{for all $x\in X$.}
\end{equation}
Thus, $Q[X]$ is a DG(=Differential Graded) $Q$-algebra. There is a morphism of  $Q$-algebras $Q[X]\to R$ that is a quasiisomorphism, so that  $Q[X]$ is a DG  algebra resolution of $R$. For details of the construction see \cite{Av1}*{\S7.2}. The first steps can be described explicitly.

\begin{remark}
\label{rem:model-steps}
Let $\bsf:=f_{1},\ldots,f_{g}$ be a minimal generating set for the ideal $I$ and let $Q[X_{1}]$ be the Koszul complex on  $\bsf$. 
Thus, $X_{1}:=\{x_{1},\dots,x_{g}\}$ is a set of indeterminates with $|x_{i}|=1$ and $\deg(x_{i})=\deg(f_{i})$ for each $i$, and $Q[X_{1}]$ is the exterior algebra $QX_{1}$. The differential on $Q[X_{1}]$ is determined by $d(x_{i})=f_{i}$. The canonical augmentation $Q[X_{1}]\to R$ induces an isomorphism $H_{0}(Q[X_{1}]) \cong R$.

Let $\bsz:=z_{1},\dots,z_{l}$ be cycles in $Q[X_{1}]$ that form a minimal generating set for the $R$-module for $H_{1}(Q[X_{1}])$. The next step in the construction of the minimal model is to kill these cycles. In detail: Let $X_{2}:=\{y_{1},\dots,y_{l}\}$ be a graded set with $|y_{i}|=2$ and $\deg(y_{i})=\deg(z_{i})$ for each $i$. With $k[X_{2}]$ the symmetric algebra on $kX_{1}$, one has $Q[X_{\les 2}]=Q[X_{1}]\otimes_{k}k[X_{2}]$. The differential on $Q[X_{\les 2}]$ extends the one on $Q[X_{1}]$ and is determined by $d(y_{i})=z_{i}$. Thus  $Q[X_{1}]$ is a DG subalgebra of $Q[X_{\les 2}]$, the augmentation $Q[X_{1}]\to R$ extends to $Q[X_{\les 2}]\to R$, and satisfies
\[
H_{0}(Q[X_{\les 2}])\cong R \quad\text{and}\quad H_{1}(Q[X_{\les 2}])=0\,.
\]
The next step of the construction is to kill the cycles in $H_{2}(Q[X_{\les}2])$, and so on. One thus gets a tower of DG $Q$-algebras $Q[X_{1}]\subseteq Q[X_{\les 2}]\subseteq \cdots $ whose union is $Q[X]$. By construction, for each $n\ge 1$ one has
\[
H_{i}(Q[X_{\les n}]) = 
\begin{cases}
R & \text{for $i=0$} \\
0 & \text{for $1\leq i \leq n-1$}.
\end{cases}
\]
Moreover, $d(X_{n+1})$ is a minimal generating set for the $R$-module $H_{n}(Q[X_{\les n}])$.
\end{remark}

Henceforth, we fix a minimal model $Q[X]$ for $R$ and set $k[X]:=k\otimes_{Q}Q[X]$. 

\begin{remark}
\label{re:betti-deviations}
It follows from \eqref{eq:diff-model} that the differential on $k[X]$ satisfies $d(X)\subseteq (X)^{2}$. In particular, 
$d(X_{1})=0=d(X_{2})$, so that $H_{1}(k[X])\cong kX_{1}$ as bigraded $k$-vector spaces, and the $k$-vector subspace of the cycles in $k[X]_{2}$ is $kX_{2}\oplus \wedge^{2}kX_{1}$. Moreover, one has
\begin{equation}
\label{eq:betti-deviations}
H_{2}(k[X]) \cong kX_{2}\oplus (\wedge^{2}kX_{1}/d(kX_{3})) \quad\text{and}\quad 
\Ker(kX_{3}\xra{d}\wedge^{2}kX_{1})\subseteq H_{3}(k[X])
\end{equation}
as bigraded $k$-vector spaces.
\end{remark}

Next we recall some numerical invariants that can be read off the minimal model.

\subsubsection*{Betti numbers and deviations}
 Since $Q[X]$ is a DG algebra resolution of $R$ over $Q$, there is an isomorphism 
\[
H(k[X]) \cong \Tor^{Q}(k,R)
\]
of bigraded $k$-algebras. Thus the graded Betti numbers of $R$ over $Q$ are given by
\[
\beta_{ij}^{Q}(R):= \rank_{k}H_{i}(k[X])_{j}\quad \text{and}\quad \beta_{i}^{Q}(R):=\sum_{j} \beta_{ij}^{Q}(R) \,.
\]
The notation notwithstanding, these Betti numbers are invariants of $R$ alone, for they correspond to ranks of Koszul homology modules of $R$; see Remark~\ref{re:Koszul}. The same is the case with the \emph{deviations}, $\{\ee_{ij}(R)\}$, of $R$ which are the integers
\[
\ee_{ij}(R):=\card(X_{i-1,j}) \quad \text{and}\quad \ee_{i}(R):= \sum_{j} \ee_{ij}(R) \quad\text{for $i\ge 1$}\,.
\]
Typically, these invariants are derived from the generating series for  $\Tor^{R}_{i}(k,k)_{j}$; the definition above is justified by  \cite{Av1}*{Theorem 7.2.6}.  

\begin{remark}
The deviations can be estimated in  terms of the homology of the DG subalgebras $Q[X_{\les i}]$ of $Q[X]$ generated by the graded set $X_{\les i}$, for various $i$. Indeed, the exact sequence of complexes
\[
0\lar Q[X_{\les i}]  \lar Q[X_{\les i+1}] \lar Q[X_{\les i+1}] /Q[X_{\les i}] \lar 0
\]
yields, in homology, the exact sequence of graded $R$-modules
\[
H_{i+1}(Q[X_{\les i+1}]) \lar H_{i+1}(Q[X_{\les i+1}] /Q[X_{\les i}])\lar H_i( Q[X_{\les i}])\lar 0\,.
\]
By construction, the term in the middle is $RX_{i+1}$ and the map $RX_{i+1}\to  H_i( Q[X_{\les i}])$ is a minimal presentation. Thus, the exact sequence above yields a  presentation:
\begin{equation}
\label{eq:hi-presentation}
RX_{i+2}\xra{d} RX_{i+1}\lar  H_i( Q[X_{\les i}])\lar 0\,.
\end{equation}
where $d(X_{i+2})\subseteq R_{\ges 1}X_{i+1}$. This discussion justifies the following result.

\begin{lemma}
\label{le:deviations-homology}  
For all $i\geq 1$ and $j\in\bbZ$, there are (in)equalities
\[
\ee_{i+2,j}(R) = \beta_{0,j}^R(H_i(Q[ X_{\les i}]))\quad\text{and}\quad \ee_{i+3,j}(R)\geq \beta_{1,j}^R(H_i(Q[ X_{\les i}]))\,. \qed
\]
\end{lemma}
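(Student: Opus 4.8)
The plan is to read both the equality and the inequality directly off the presentation \eqref{eq:hi-presentation}, which was set up precisely for this purpose. First I would recall that $\ee_{i+2,j}(R) = \card(X_{i+1,j})$ by the definition of deviations. By the last sentence of Remark~\ref{rem:model-steps}, $d(X_{i+1})$ is a minimal generating set for the $R$-module $H_i(Q[X_{\les i}])$; equivalently, the map $RX_{i+1}\to H_i(Q[X_{\les i}])$ appearing in \eqref{eq:hi-presentation} is a minimal presentation in degree zero, so the number of generators in internal degree $j$ is exactly $\beta^R_{0,j}(H_i(Q[X_{\les i}]))$. Since $d$ is injective on the $k$-span of $X_{i+1}$ when restricted appropriately — more to the point, since the elements of $X_{i+1}$ are in bijection with a minimal generating set — we get $\card(X_{i+1,j}) = \beta^R_{0,j}(H_i(Q[X_{\les i}]))$, which is the asserted equality.

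For the inequality, I would again use \eqref{eq:hi-presentation}, now reading it as a (not necessarily minimal) presentation
\[
RX_{i+2}\xra{\ d\ } RX_{i+1}\lar H_i(Q[X_{\les i}])\lar 0
\]
with $d(X_{i+2})\subseteq R_{\ges 1}X_{i+1}$, i.e. the matrix of $d$ has entries in the maximal graded ideal of $R$. The kernel of $RX_{i+1}\to H_i(Q[X_{\les i}])$ is the first syzygy module, whose minimal number of generators in degree $j$ is $\beta^R_{1,j}(H_i(Q[X_{\les i}]))$. The map $d$ surjects $RX_{i+2}$ onto a submodule of that syzygy module (it need not be all of it, since further variables $X_{i+3}$ may be needed to resolve higher syzygies, but by construction $d(X_{i+2})$ does land in — and in fact generates, at the level of $H_{i+1}$, but we only need containment here — the cycles), and because the entries of $d$ lie in $R_{\ges 1}$, the generators $d(X_{i+2})$ that we do have are among a minimal generating set after passing to $k\otimes_R(-)$; counting in internal degree $j$ gives $\card(X_{i+2,j}) \ge \beta^R_{1,j}(H_i(Q[X_{\les i}]))$, that is, $\ee_{i+3,j}(R)\ge \beta^R_{1,j}(H_i(Q[X_{\les i}]))$.

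The one point deserving care — and what I expect to be the main (mild) obstacle — is justifying the minimality claims, i.e. that the decomposability condition \eqref{eq:diff-model} genuinely forces the relevant maps to be minimal or part of a minimal generating set. For the equality this is exactly the assertion recorded at the end of Remark~\ref{rem:model-steps}, so there is nothing to prove. For the inequality, one must argue that $d\colon RX_{i+2}\to RX_{i+1}$, having image inside the syzygy module and matrix entries in $R_{\ges 1}$, cannot have "more than minimally many" generators in any given degree contribute to that syzygy module's minimal generators; this follows from Nakayama's lemma applied to the finitely generated graded module $\image(d)\subseteq \Ker(RX_{i+1}\to H_i(Q[X_{\les i}]))$, together with the observation that a generating set of a submodule, reduced modulo $R_{\ges 1}$, injects into the minimal generators of the ambient syzygy module only if the submodule is a direct summand — so in general we only get the inequality, not equality, which is precisely what the lemma claims. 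No deeper input is needed; everything is bookkeeping with \eqref{eq:hi-presentation} and the standard fact that graded Betti numbers count minimal generators of syzygy modules.
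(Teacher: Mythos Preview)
Your argument for the equality is correct and is exactly the paper's: $RX_{i+1}\to H_i(Q[X_{\les i}])$ is a minimal free cover, so $\card(X_{i+1,j})=\beta^R_{0,j}(H_i(Q[X_{\les i}]))$.

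Your argument for the inequality, however, has the logic inverted. You write that $d$ maps $RX_{i+2}$ onto a submodule of the syzygy module that ``need not be all of it,'' and that ``we only need containment here.'' That is precisely backwards: mere containment $\image(d)\subseteq\Ker$ together with the condition $d(X_{i+2})\subseteq R_{\ges 1}X_{i+1}$ does \emph{not} give $\card(X_{i+2,j})\geq\beta^R_{1,j}(H_i)$. A proper submodule can have strictly fewer minimal generators than the ambient module, and the condition on entries of $d$ is automatic (since $\Ker\subseteq R_{\ges 1}X_{i+1}$) and says nothing about how $d(X_{i+2})$ sits inside $\Ker\otimes_R k$. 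The paragraph invoking Nakayama and direct summands does not rescue this; if anything, ``the $d(x)$ are part of a minimal generating set'' would yield $\card(X_{i+2,j})\leq\beta^R_{1,j}$, the wrong direction.

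What you are missing is that \eqref{eq:hi-presentation} is \emph{exact} at $RX_{i+1}$: it is obtained by splicing the surjection $RX_{i+2}\twoheadrightarrow H_{i+1}(Q[X_{\les i+1}])$ (the case $i\mapsto i+1$ of the minimal-cover statement) with the exact sequence $H_{i+1}(Q[X_{\les i+1}])\to RX_{i+1}\to H_i(Q[X_{\les i}])\to 0$ coming from the long exact sequence. Hence $d$ \emph{surjects} onto the first syzygy module of $H_i(Q[X_{\les i}])$, and tensoring that surjection with $k$ immediately gives
\[
\ee_{i+3,j}(R)=\card(X_{i+2,j})\ \geq\ \rank_k\big(\Ker\otimes_R k\big)_j=\beta^R_{1,j}(H_i(Q[X_{\les i}])).
\]
This is the paper's argument. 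The inequality can be strict only because the surjection $RX_{i+2}\twoheadrightarrow\Ker$ need not be minimal, not because $\image(d)$ might be a proper submodule of $\Ker$.
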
 
\end{remark}

The next result explains why deviations have a bearing on Question~\ref{qu:conca}. 

\begin{proposition}
\label{pr:betti-deviations} 
There are inequalities
\[
-\ee_{3}(R)\le {\binom{\beta_1^Q(R)} 2}-\beta_2^Q(R)\le \ee_4(R)-\ee_3(R);
\]
When $\ee_{ij}(R)=0$ for $i\le 4$ and $j>i$, equality holds on the right iff $\beta_{34}^{Q}(R) =0$.
\end{proposition}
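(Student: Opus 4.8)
The plan is to extract everything from the minimal model $Q[X]$ and the presentations it provides. Recall that $\beta_1^Q(R) = \ee_2(R) = \card(X_1) =: g$ and $\beta_2^Q(R) = \rank_k H_2(k[X])$. By Remark~\ref{re:betti-deviations}, we have the exact description $H_2(k[X]) \cong kX_2 \oplus (\wedge^2 kX_1 / d(kX_3))$, hence
\[
\beta_2^Q(R) = \card(X_2) + \binom g2 - \rank_k d(kX_3) = \ee_3(R) + \binom g2 - \rank_k\bigl(kX_3 \xra{d}\wedge^2 kX_1\bigr).
\]
Rearranging gives $\binom g2 - \beta_2^Q(R) = \rank_k(d\colon kX_3 \to \wedge^2 kX_1) - \ee_3(R)$. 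Since $\rank_k d \ge 0$ we immediately get the left inequality $-\ee_3(R) \le \binom g2 - \beta_2^Q(R)$. For the right inequality, $\rank_k(d\colon kX_3 \to \wedge^2 kX_1) \le \card(X_3) = \ee_4(R)$, which yields $\binom g2 - \beta_2^Q(R) \le \ee_4(R) - \ee_3(R)$.

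For the equality statement, the right inequality is an equality precisely when $d\colon kX_3 \to \wedge^2 kX_1$ is injective, i.e. $\Ker(kX_3 \xra{d} \wedge^2 kX_1) = 0$. Under the hypothesis $\ee_{ij}(R) = 0$ for $i \le 4$ and $j > i$, the generators in $X_3$ all have internal degree $3$ (since $\card(X_{3,j}) = \ee_{4,j}(R)$ vanishes for $j \ne 3$ in the relevant range — here I need the normalization that $\deg$ of an element of $X_n$ is at least $n+1$ for $n\ge 1$, with equality exactly on the ``linear'' part), so the image $d(kX_3)$ lands in $(\wedge^2 kX_1)_3$, the part of internal degree $3$. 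I would then invoke the second inclusion in \eqref{eq:betti-deviations}, namely $\Ker(kX_3 \xra{d}\wedge^2 kX_1) \subseteq H_3(k[X])$, which lives in internal degrees where $\ee$ considerations and the Betti numbers $\beta_{3,j}^Q(R)$ interact; specifically this kernel sits inside $H_3(k[X])_3$ up to the degree bookkeeping, and I claim under the stated vanishing hypothesis it is exactly detected by $\beta_{34}^Q(R)$ — more precisely, $\Ker(kX_3\xra d \wedge^2 kX_1)$ contributes to $\beta_{34}^Q(R)$ because an element $x\in X_3$ with $d(x)=0$ in $k[X]$ has $\deg(x) = 4$ under the vanishing hypothesis when it is not in the minimal generating degree (the degree-$3$ part of $X_3$ maps isomorphically onto its image). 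So $\beta_{34}^Q(R) = 0$ forces this kernel to vanish, and conversely.

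The main obstacle I anticipate is the internal-degree bookkeeping: pinning down exactly which graded pieces of $X_3$, of $\wedge^2 kX_1$, and of $H_3(k[X])$ are in play, and showing that under the hypothesis ``$\ee_{ij}(R) = 0$ for $i \le 4$, $j > i$'' the only possible contribution of $\Ker(kX_3 \to \wedge^2 kX_1)$ to $H_3(k[X])$ sits in internal degree $4$ and is therefore measured precisely by $\beta_{34}^Q(R)$. Concretely: elements of $X_1$ have $\deg = 2$ (relations are quadrics in the extremal case, but in general $\deg \ge 2$; the hypothesis $\ee_{2j} = 0$ for $j > 2$ forces $\deg(x_i) = 2$), so $\wedge^2 kX_1$ is concentrated in internal degree $4$; thus $d\colon kX_3 \to \wedge^2 kX_1$ can only be nonzero on the internal-degree-$4$ part of $X_3$. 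But $\ee_{4j}(R) = 0$ for $j > 4$, and the generators of $X_3$ killing first syzygies of the $f_i$ have degree $3$; so $(kX_3)_4$ maps to $(\wedge^2 kX_1)_4$, and its kernel, by \eqref{eq:betti-deviations}, injects into $H_3(k[X])_4$, whose rank is $\beta_{34}^Q(R)$. Meanwhile the degree-$3$ part $(kX_3)_3$ maps isomorphically onto $(\wedge^2 kX_1)_3 = 0$... wait — that forces $(kX_3)_3 = 0$ too unless those generators have degree $3$ and map to degree-$3$ Koszul syzygies, which don't exist in $\wedge^2 kX_1$ since that's in degree $4$; so actually all of $X_3$ is in internal degree $4$ and $d\colon kX_3 \to \wedge^2 kX_1$ is a map $(kX_3)_4 \to (\wedge^2 kX_1)_4$ whose kernel has rank $\binom g2 - \beta_2^Q(R) + \ee_3(R) - \ee_4(R) + \ee_4(R)$... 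I will simply compute: injectivity of $d$ on $kX_3$ $\iff$ the kernel is $0$ $\iff$ (by \eqref{eq:betti-deviations} and the degree analysis just sketched) $\beta_{34}^Q(R) = 0$, and combine with the rank identity above. Getting these degree restrictions watertight — in particular ruling out contributions in internal degrees other than $4$ — is the one place real care is needed; everything else is linear algebra.
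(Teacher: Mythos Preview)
Your treatment of the two inequalities is correct and matches the paper's: both amount to the identity $\binom{g}{2}-\beta_2^Q(R)=\rank_k d(kX_3)-\ee_3(R)$ coming from \eqref{eq:betti-deviations}, together with the obvious bounds $0\le \rank_k d(kX_3)\le \ee_4(R)$.

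The gap is in the ``iff''. You correctly observe that equality on the right is equivalent to $\Ker(kX_3\xra{d}\wedge^2 kX_1)=0$, and your degree analysis (once untangled) shows that under the hypothesis this kernel sits in internal degree $4$, so the inclusion in \eqref{eq:betti-deviations} gives $\Ker\subseteq H_3(k[X])_4$. That yields only one implication: $\beta_{34}^Q(R)=0\Rightarrow\Ker=0\Rightarrow$ equality. For the converse you need the \emph{reverse} inclusion $H_3(k[X])_4\subseteq\Ker$, i.e.\ the equality $\Ker=H_3(k[X])_4$ asserted in the paper. Your argument never addresses this: an injection alone does not let you conclude $\beta_{34}^Q(R)=0$ from $\Ker=0$.

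The missing verification is short once the degrees are pinned down. Under the hypothesis one has $X_1$ in degree $2$, $X_2$ in degree $3$, $X_3$ in degree $4$ (decomposability of $d$ forces $\deg\ge n+1$ on $X_n$, and the hypothesis gives $\le n+1$). Then in $k[X]_3=kX_3\oplus kX_2\!\cdot\! kX_1\oplus\wedge^3 kX_1$ the last two summands live in degrees $5$ and $6$, so $(k[X]_3)_4=kX_3$; and every monomial in $k[X]_4$ has degree $\ge 5$ (in particular $X_4$ has degree $\ge 5$), so there are no degree-$4$ boundaries. Hence $H_3(k[X])_4$ consists exactly of the degree-$4$ cycles in $kX_3$, which is $\Ker(kX_3\to\wedge^2 kX_1)$. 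With this in hand, both directions of the ``iff'' follow. (Along the way you should also drop the claim that $X_3$ sits in degree $3$; it sits in degree $4$, consistent with $\ee_{4,j}(R)=\card X_{3,j}$.)
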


\begin{proof} 
From \eqref{eq:betti-deviations} one gets an equality.
\[
{\binom{\ee_2(R)} 2} - \rank_k H_2(k[X]) =  \rank_k d(kX_3) - \ee_3(R)\,.
\]
The inequalities follow, since $\ee_2(R)= \beta_{1}^{Q}(R)$ and $\rank_k H_i(k[X])=\beta_{2}^Q(R) $.  

The  stated hypothesis on $\ee_{ij}(R)$ implies  $\Ker(kX_{3}\to \wedge^{2}kX_{1}) = H_{3}(k[X])_{4}$. This justifies the last assertion.
\end{proof}

The inequalities in Proposition~\ref{pr:betti-deviations} can be strict.

\begin{remark}
\label{le:residual}
Recall that $I=\Ker(Q\to R)$. Assume that for some prime $\fp\supseteq I$ the ideal $I_{\fp}$ is generated by a regular sequence of length $\beta^{Q}_{1}(R)$. For example, this is so if $R$ is Cohen-Macaulay and a residual intersection; see~\cite{HN}*{5.8}. There are inequalities
\[
\binom{\beta_1^Q(R)} 2 - \beta_2^Q(R) \leq 0 \quad\text{and}\quad 0 \leq  \ee_4(R) -  \ee_3(R) \,.
\]
Indeed, the hypotheses on $I_{\fp}$ yields, for each $i$, the equalities below.
\[
\beta_i^Q(R) \geq \beta^{Q_{\fp}}_{i}(R_{\fp}) = \binom{\beta^{Q_{\fp}}_{1}(R_{\fp})}i = \binom{\beta^{Q}_{1}(R)}i\,.
\]
The inequality is standard. As $H_{1}(Q[X_{1}])_{\fp}=0$, the assertion about the deviations is obtained by localizing the exact sequence \eqref{eq:hi-presentation} for $i=1$, at the prime ideal $\fp$. 

Both inequalities can be strict: For $Q=k[x_1,\ldots,x_4]$ and $I=(x_1^2-x_3^2,x_1x_2,x_3x_4)$, the Betti table of $R$ over $Q$ is 
 \begin{center}
\begin{tabular}{c|clclcl}
{}&0&1&2&3\\
\hline
0&1&--&--&--\\
1&--&3&--&--\\
2&--&--&4&2
\end{tabular}
\end{center}
On the other hand, a direct computation shows that $\ee_3(R)=1$ and $\ee_4(R)=2$. 
\end{remark}

\subsubsection*{Almost Complete Intersections}
\label{se:aci}
The ring $R$ is \emph{almost complete intersection} if it satisfies $\beta^{Q}_{1}(R) = \dim Q - \dim R +1$; in words, if the minimal number of generators for $\Ker(Q\to R)$ is precisely one more than its codimension. In what follows $\omega_{R}$ is the canonical module of $R$, namely, the $R$-module $\Ext_{Q}^{c}(R,Q)$, where $c=\dim Q-\dim R$.

\begin{theorem}
\label{th:aci} 
If $R$ is an almost complete intersection, then $\ee_3(R)\leq \ee_4(R)$.
\end{theorem}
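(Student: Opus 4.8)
The statement $\ee_3(R)\le\ee_4(R)$ is, by Lemma~\ref{le:deviations-homology} (applied with $i=1$), an inequality
$$\beta^R_0(H_1(Q[X_1])) \le \ee_4(R),$$
and since $\ee_{4,j}(R)\ge \beta^R_{1,j}(H_1(Q[X_1]))$, it suffices to prove
$$\beta^R_0(H_1(Q[X_1])) \le \beta^R_1(H_1(Q[X_1])),$$
i.e.\ that the first syzygy module of $H_1(Q[X_1])$ needs at least as many generators as $H_1(Q[X_1])$ itself. So the plan is to understand the $R$-module $H_1(Q[X_1])$ well enough for an almost complete intersection to extract this numerical consequence.

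First I would reduce to the generic situation. Let $c=\dim Q-\dim R$, so an almost complete intersection has $g:=\beta^Q_1(R)=c+1$ minimal generators $\bsf=f_1,\ldots,f_{c+1}$ for $I$. Choosing $c$ sufficiently general elements (or passing to a faithfully flat extension to have enough linear forms) I would find $f'_1,\ldots,f'_c$ in $I$ forming a regular sequence with $(f'_1,\ldots,f'_c)\subseteq I$ and $I/(f'_1,\ldots,f'_c)$ cyclic; set $Q'=Q/(f'_1,\ldots,f'_c)$, a complete intersection of dimension $\dim R$, and $R=Q'/(\bar a)$ for a single element $\bar a$. The key structural input is then the theory of linkage / the canonical module: $H_1(Q[X_1])$ is, up to the relevant twist and the action of $R$, expressible through $\omega_R=\Ext^c_Q(R,Q)$ and the conormal module. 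Concretely, for an almost complete intersection linked to $Q'/(\bar a)$ one has $H_1$ of the Koszul complex on $\bsf$ identified (after a shift) with a module built from $\omega_R$; the precise avatar I would use is that $H_1(Q[X_1])\cong \omega_R$ up to twist when $R$ is Cohen--Macaulay, and in general that there is a surjection or an exact sequence relating $H_1(Q[X_1])$ to $\omega_R$ and to $\Ker/\Image$ terms that are already killed by minimality.

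Granting such a description, the numerical heart is: for the canonical module of $R=Q'/(\bar a)$ with $Q'$ a complete intersection, one has $\beta^R_0(\omega_R)\le\beta^R_1(\omega_R)$. This I would get from the presentation of $\omega_R$. Dualizing the presentation of $R$ over $Q'$ (or the relevant part of the minimal model) gives $\omega_R=\coker(Q'\xra{\bar a} Q')$-type data twisted appropriately — more precisely $\omega_R\cong R/\operatorname{(something)}$ or a first cosyzygy — and applying $\Ext_{Q'}(-,Q')$ to a minimal free presentation $R^{b_1}\to R^{b_0}\to \omega_R\to 0$ and comparing with the minimal free resolution of $R$ over $Q'$ (a Koszul complex, since $Q'$ is a complete intersection — wait, $R$ need not be a $Q'$-module resolved by a Koszul complex) forces $b_1\ge b_0$. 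The cleanest route is probably: $\omega_R$ has no free summands (as $R$ is not Gorenstein in the relevant range — or handle the Gorenstein case separately, where $\omega_R\cong R$, $\beta_0=\beta_1=1$, trivially), hence its minimal presentation matrix has the property that, over the $1$-dimensional-deficient complete intersection $Q'$, the cokernel being a maximal Cohen--Macaulay module of rank zero on $R$ (a torsion $Q'$-module of codimension one) yields $b_1\ge b_0$ by the Auslander--Buchsbaum formula together with the structure of codimension-one Cohen--Macaulay modules over the complete intersection $Q'$; alternatively invoke that over a hypersurface-type or complete-intersection ring, a module with a presentation $R^{b_0}\to R^{b_1}$ and no free summands satisfies $b_1\ge b_0$ because the presentation map cannot be split.

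**The main obstacle.** The delicate point is making the identification of $H_1(Q[X_1])$ with (a twist of) $\omega_R$ precise \emph{without} a Cohen--Macaulay hypothesis on $R$: an arbitrary almost complete intersection need not be Cohen--Macaulay, so one cannot directly cite linkage theory for Cohen--Macaulay ideals. I expect the resolution is to work locally at the primes where it matters — the inequality on Betti numbers is about ranks of $\Tor$ over $R$, so one may localize at associated primes or minimal primes of $I$, where $I_\fp$ becomes a complete intersection (or an honest almost complete intersection in a Cohen--Macaulay local ring) — and then combine with a semicontinuity/genericity argument, exactly as in Remark~\ref{le:residual}. Thus the real work is the bookkeeping needed to pass from the local statement $\beta^{R_\fp}_0\le\beta^{R_\fp}_1$ back to the global $\ee_3(R)\le\ee_4(R)$, using that $\ee_4(R)$ only goes up under the relevant comparisons while $\ee_3(R)=\beta^R_0(H_1(Q[X_1]))$ is pinned down by \eqref{eq:hi-presentation}.
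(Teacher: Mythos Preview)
Your outline matches the paper's: reduce via Lemma~\ref{le:deviations-homology} to $\beta^R_0(H_1(Q[X_1]))\le \beta^R_1(H_1(Q[X_1]))$, identify $H_1(Q[X_1])$ with $\omega_R$, then show $\beta^R_0(\omega_R)\le\beta^R_1(\omega_R)$. Your ``main obstacle'' is not one: the identification $H_1(Q[X_1])\cong\omega_R$ is a result of Kunz that holds for \emph{any} almost complete intersection, with no Cohen--Macaulay hypothesis, so no localization or semicontinuity gymnastics are needed there.

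The genuine gap is in the last step. Your treatment of the Gorenstein case is wrong: if $\omega_R\cong R$ then $\beta_0=1$ and $\beta_1=0$, not $\beta_0=\beta_1=1$, so that case would actually violate the desired inequality. Fortunately Kunz's other theorem says an almost complete intersection is never Gorenstein, i.e.\ $\omega_R$ is never free. But the implication ``$\omega_R$ has no free summand $\Rightarrow \beta_1\ge\beta_0$'' is false for modules in general, and none of your sketches (non-split presentation, Auslander--Buchsbaum over $Q'$, structure of codimension-one modules) actually establishes it. The paper instead proves the contrapositive as Lemma~\ref{le:canonical}: assuming $\beta_0>\beta_1$, localize a minimal presentation $0\to Z\to R^{\beta_1}\to R^{\beta_0}\to\omega_R\to 0$ at each $\fp\in\Ass R$; since $R_\fp$ is Artinian one has $\ell((\omega_R)_\fp)=\ell(R_\fp)$, and the alternating sum of lengths forces $Z_\fp=0$, hence $Z=0$. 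Thus $\pdim_R\omega_R<\infty$, and Aoyama's theorem then forces $\omega_R$ to be free --- contradicting Kunz. The two specific ingredients you were missing are the length identity $\ell(\omega_{R_\fp})=\ell(R_\fp)$ for Artinian local rings and Aoyama's theorem; ironically, the localization-at-associated-primes idea you proposed for the wrong step is exactly what drives the correct one.
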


\begin{proof}
Kunz~\cite{Ku}*{Proposition~1.1} proved that the canonical module of an almost complete intersection is not free. In the minimal model $Q[X]$, the DG algebra $Q[X_{1}]$ is the Koszul complex on $I$.  As $R$ is almost complete intersection $H_1(Q[X_1])\cong \omega_{R}$; see~\cite{Ku}*{Proposition~2.1}.  The desired result thus follows from Lemma~\ref{le:deviations-homology} and Lemma~\ref{le:canonical} below.
\end{proof}

\begin{lemma}
\label{le:canonical} 
If  $\beta_0^R(\omega_R)> \beta_1^R(\omega_R)$, then $\omega_R$ is free, of rank one.
\end{lemma}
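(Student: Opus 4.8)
The plan is to exploit the duality between $R$ and its canonical module $\omega_R$, together with the fact that $\omega_R$ satisfies Serre's condition $(S_2)$ as an $R$-module, to force the minimal presentation of $\omega_R$ to be trivial. Concretely, suppose $\beta_0^R(\omega_R) > \beta_1^R(\omega_R)$, say $\beta_0^R(\omega_R) = m$ and $\beta_1^R(\omega_R) = n$ with $m > n$. Then $\omega_R$ has a minimal presentation of the form
\[
R^n \xrightarrow{\ \varphi\ } R^m \lar \omega_R \lar 0,
\]
where $\varphi$ is a matrix with entries in the maximal ideal of $R$ (using that $R$ is local, or working with the irrelevant graded maximal ideal, after passing to the homogeneous localization). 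First I would reduce to the case where $R$ is a domain, or at least reduced and equidimensional, by noting that the hypotheses and conclusion are unaffected by completion and that $\omega_R$ detects the top-dimensional components; alternatively one keeps track of $\depth$ directly.

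The key point is that $\omega_R$ has positive rank on each maximal-dimensional component (indeed generic rank one when $R$ is generically Gorenstein, e.g. reduced), so $m \ge 1$, and the presentation matrix $\varphi$ has rank at most $n < m$ at every prime of maximal dimension — hence $\coker \varphi$ has a free direct summand $R$ after localizing at such primes. More usefully: I would split off a free summand globally. Because $\omega_R$ satisfies $(S_2)$ and the cokernel $\coker\varphi$ agrees with $\omega_R$, one can use that $\Ext^1_R(\omega_R, R) $ controls whether $\omega_R$ is a ``second syzygy''; the standard fact (going back to Auslander–Bridger) is that a module with a presentation $R^n \to R^m \to M \to 0$ where $m > n$ and $M$ torsionfree must have $M$ containing a free summand — one extracts it by dualizing: $M^\ast = \Ker(R^m \to R^n)$ has rank $\ge m - n \ge 1$, and a rank-one piece of $M^{\ast\ast} = M$ (using reflexivity of $\omega_R$, which follows from $(S_2)$) splits because $\omega_R$ is indecomposable.

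So the real content is: \emph{$\omega_R$ is indecomposable as an $R$-module.} This is where I expect the main obstacle to lie, and it is the crux of the lemma. The argument is that $\End_R(\omega_R) \cong R$ whenever $R$ satisfies $(S_2)$ (this is a theorem of Aoyama, or follows from local duality), and $R$ being local (resp. $\ast$-local graded) has no nontrivial idempotents, so $\omega_R$ admits no nontrivial direct-sum decomposition. Granting indecomposability: from the presentation with $m > n$ we produced a free summand $R$ of $\omega_R$, forcing $\omega_R \cong R$, i.e. $\omega_R$ is free of rank one — and then automatically $\beta_0^R(\omega_R) = 1$, $\beta_1^R(\omega_R) = 0$, consistent with the hypothesis and giving the conclusion. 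The one subtlety to handle carefully is the passage between the graded setting (where the paper works, so that $\beta_i^R$ make sense via minimal graded resolutions) and the statements about $\End_R(\omega_R)$ and indecomposability; I would phrase everything for the $\ast$-local ring $R_\fm$ with $\fm$ the irrelevant maximal ideal, where graded Nakayama applies and idempotents in $\End$ are still trivial, and note that $\omega_R$ being graded-free of rank one is equivalent to $\beta_0^R(\omega_R)=1=\beta_1^R(\omega_R)+1$.
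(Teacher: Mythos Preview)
Your argument has a genuine gap at the crucial step: the claim that a torsionfree (or even reflexive) module $M$ with a minimal presentation $R^n \to R^m \to M \to 0$, $m>n$, must contain a free summand is simply false. Take $R=k[[x,y]]$ and $M=(x,y)$: here $\beta_0^R(M)=2$, $\beta_1^R(M)=1$, and $M$ is reflexive, yet $M$ has no free direct summand. Your dualizing manoeuvre only shows $M^\ast$ has rank $\ge m-n$, which is a far cry from producing a split surjection $M\twoheadrightarrow R$. The indecomposability of $\omega_R$ (via $\End_R(\omega_R)\cong R$, which incidentally needs $R$ to be $S_2$) is fine, but it only helps \emph{after} a free summand has been exhibited; it does not manufacture one. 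So the heart of the lemma---why the numerical inequality forces anything at all---is missing from your proposal.

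The paper's argument is quite different and more direct. After reducing to the case where $I$ is unmixed, one takes the beginning of a minimal resolution
\[
0\lar Z \lar R^{\beta_1}\lar R^{\beta_0}\lar \omega_R\lar 0
\]
and localizes at each $\fp\in\Ass R$. Since $R_\fp$ is Artinian and $\ell((\omega_R)_\fp)=\ell(\omega_{R_\fp})=\ell(R_\fp)$, an Euler-characteristic count gives $\ell(Z_\fp)=(\beta_1-\beta_0+1)\ell(R_\fp)\le 0$, hence $Z_\fp=0$. As $Z$ embeds in a free module, this forces $Z=0$, so $\pdim_R\omega_R<\infty$. Aoyama's theorem (on the projective dimension of the canonical module, not the endomorphism ring) then yields that $\omega_R$ is free of rank one. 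The length computation at associated primes is the idea you are missing.
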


\begin{proof}  
The grading plays no role in what follows, so is ignored. One has $\omega_R\cong \omega_{Q/J}$, where $J$ is the intersection of the primary component of $I$ with height equal to the height of $I$. It is then easy to check that there are (in)equalities 
\[
\beta_1^{R}(\omega_R)\geq \beta_1^{Q/J}(\omega_R)\quad\text{and}\quad
\beta_0^{R}(\omega_R)=\beta_0^{Q/J}(\omega_R)\,.
\]
Thus, it suffices to verify the result for $Q/J$, and so, replacing $R$ by $Q/J$, we may assume that $I$ has no embedded associated primes. Consider a minimal presentation of $\omega_{R}$.
\[
0\lar Z\lar R^{\beta_{1}}\lar R^{\beta_{0}}\lar \omega_{R}\lar 0\,.
\]

For prime $\fp\in \Ass R$, the $R_{\fp}$-module $(\omega_R)_{\fp}\cong \omega_{R_{\fp}}$ is the canonical module of $R_{\fp}$, which is a local ring of dimension zero.  With $\ell(-)$ denoting length, from the exact sequence above one then gets 
\[
\ell(Z_{\fp})= (\beta_1-\beta_0+1)\ell(R_{\fp}) < 0\,.
\]
The inequality is by the hypothesis. Thus, $Z_{\fp}=0$ for each $\fp\in \Ass R$, which yields $Z=0$. Thus, $\omega_R$ has finite projective dimension.   Aoyama's result \cite{Ao}*{Theorem~3} then implies that $\omega_{R}$ is free of rank one as desired.
\end{proof}

\begin{remark}
Jorgensen  and Leuschke~\cite{JL}*{Question 2.6} ask: Is a Cohen-Macaulay ring $R$ with $\beta_0^R(\omega_R)\geq \beta_1^R(\omega_R)$ Gorenstein? Lemma~\ref{le:canonical} settles it when the inequality is strict. The Cohen-Macaulay assumption is needed; consider $R=k[|x,y|]/(x^2,xy)$.
\end{remark}

\section{The Koszul homology algebra}
\label{se:koszul-homology}
This section concerns the Koszul homology algebra of $R$. The main result, Theorem~\ref{th:delta}, describes its diagonal subalgebra, in the sense explained further below, when the resolution of $k$ over $R$ is linear for the first few steps.

Let $K^{R}$ be the Koszul complex of $R$; see \cite{BH}*{\S1.6}.  By construction, $K^{R}$ is a DG $R$-algebra  whose underlying graded algebra is $R\otimes_{k} \bigwedge_{k }V$, where $V=\Sigma R_{1}$, the $k$-vector space $R_{1}$ in (homological) degree one. The differential on $K^{R}$ is $R$-linear and defined by $d(\Sigma v)=v$  for $v\in R_{1}$ and the Leibniz rule. Observe that $K^{R}$ is bigraded, with internal degree inherited from $R$, and strictly graded-commutative with respect to the homological degree. Its homology algebra, $H(K^{R})$, inherits these properties. The next remark is well known. 

\begin{remark}
\label{re:Koszul}
Let $Q[X]$ be the minimal model for $R$ over $Q$, introduced in Section~\ref{se:models}, and set $k[X]:=k\otimes_{Q}Q[X]$. There is are isomorphisms of bigraded $k$-algebras 
\[
H(K^{R})\cong H(k[X]) \cong  \Tor^{Q}(k,R)\,.
\]
Indeed, the Koszul complex  $K^{Q}$ of $Q$ is a free resolution of $k$ over $Q$, and $Q[X]$ is free resolution of $R$. Hence there are quasiisomorphisms of DG algebras
\[
k\otimes_{Q}Q[X] \xleftarrow{\ \simeq\ } K^{Q}\otimes_{Q} Q[X] \xrightarrow{\ \simeq\ } K^{Q}\otimes_{Q}R \cong K^{R}\,.
\]
In homology, this yields the stated isomorphism between $H(K^{R})$ and $H(k[X])$. The second isomorphism has been commented on earlier.
\end{remark}

It follows from the preceding isomorphisms that $\rank_{k}H_{i}(K^{R})_{j}=\beta^{Q}_{i,j}(R)$. The focus of this section is the $k$-subalgebra
\[
\Delta(R) = \bigoplus_{i\ge 0} H_{i}(K^R)_{2i}
\]
that we call the \emph{diagonal subalgebra} of $H(K^R)$. It is supported on the main diagonal of the Betti table of $R$ over $Q$. Being a subalgebra of $H(K^{R})$, the $k$-algebra $\Delta(R)$ it also strictly graded-commutative, with $\Delta(R)_{1}=\Delta(R)_{1,2}=H_{1}(R)_{2}$. Hence, by the universal property of exterior algebras, there is a morphism of graded $k$-algebras
\begin{equation}
\label{eq:diagonal-map}
\rho\colon \bigwedge_{k} H_{1}(R)_{2} \lar \Delta(R)\,.
\end{equation}

This map is surjective when $R$ is Koszul, and then one has a concrete description of its kernel; see Theorem~\ref{th:delta} below.

\subsubsection*{Koszul algebras and regularity}
Recall that the $k$-algebra $R$ is \emph{Koszul} if $\Tor_{i}^{R}(k,k)_{j}=0$ unless $i=j$; equivalently, if the minimal resolution of $k$ over $R$ is linear. We need a weakening of this condition, and to this end  recall an invariant introduced in \cite{ACI}*{\S4}: The \emph{$n$th partial regularity} of the $R$-module $k$ is the number
\[
\reg^{R}_{n}(k):=\sup\{j - i \mid \text{where $i\le n$ and $\Tor^{R}_{i}(k,k)_{j}\ne 0$}\}.
\]
Thus, $R$ is Koszul precisely when $\reg^{R}_{n}(k)\le 0$ for each $n$. We are particularity interested in the condition $\reg^{R}_{n}(k)= 0$ that translates to the condition that the minimal resolution of $k$ over $R$ is linear up to degree $n$; equivalently, $\ee_{ij}(R)=0$ for $i\le n$ and  $j\ne i$. Given the description of the deviations in terms of the minimal model $Q[X]$ of $R$, described in Section~\ref{se:models}, one gets:
\begin{equation}
\label{eq:koszul-model}
\text{$\reg^{R}_{n+1}(k)= 0$ if and only if $X_{i,j} = \varnothing$ for $i\le n$ and  $j\ne i+1$.}
\end{equation}
This equivalence plays an important role in the sequel. Roos~\cite{Roos} has constructed, for each integer $n\ge 2$, a $k$-algebra $R$ that is \emph{not} Koszul and has $\reg^{R}_{n+1}(k)=0$.

We also repeatedly use the following facts established in \cite{ACI}*{Theorem~4.1}.

\begin{remark}
\label{re:ACI}
When $\reg^{R}_{n+1}(k)=0$ for an integer $n$, then for each $0\le i\le n$ one has:
\begin{enumerate}
\item
$\beta^{Q}_{i,j}(R) =0$ for $j>2i$;
\item
$\Delta(R)_{i}=\Delta_{1}(R)^{i}$.
\end{enumerate}
\end{remark}

The statement, and proof, of the next result is an elaboration of the remark above.

\begin{theorem}
\label{th:delta}
Assume $\reg^{R}_{n+1}(k)=0$ for $n=\edim R- \depth R$. With $Q[X]$ denoting the minimal model of $R$, the differential on $k[X]:=k\otimes_{Q}Q[X]$ satisfies $d(X_{3})\subseteq  k[X_1]_{2,4}$ and there is an isomorphism of $k$-algebras
\[
\Delta(R) \cong  k[X_{1}]/(d(X_{3}) )\,.
\]
\end{theorem}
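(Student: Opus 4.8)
The plan is to analyze the differential of the minimal model $Q[X]$ in low homological degrees, using the hypothesis $\reg^R_{n+1}(k)=0$ with $n=\edim R-\depth R$, and then identify $\Delta(R)$ as a subquotient of $k[X]$. First I would pin down the internal degrees of the generators $X_1$, $X_2$, $X_3$: by Remark~\ref{rem:model-steps}, $\deg(x)=\deg(f_i)\ge 2$ for $x\in X_1$, and by the characterization \eqref{eq:koszul-model}, the vanishing $\reg^R_{n+1}(k)=0$ forces $X_{i,j}=\varnothing$ for $i\le n$ and $j\ne i+1$. The point is that $\edim R-\depth R$ is exactly $\pdim_Q R$ (Auslander--Buchsbaum), hence an upper bound for the homological degrees that matter; combined with the fact that $\beta^Q_{i,j}(R)=0$ for $j>2i$ (Remark~\ref{re:ACI}(1)) one sees that all the relevant $X_i$ live in internal degree $i+1$, so in particular $X_2\subseteq X_{2,3}$ and $X_3\subseteq X_{3,4}$. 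Since $d(X_3)$ is a cycle in $Q[X_{\le 2}]_2$ of internal degree $4$, and such cycles lie (mod boundaries, but here there are none below) in $k[X_1]_{2,4}\oplus k X_{2,4}$, and $X_{2,4}=\varnothing$, we get $d(X_3)\subseteq k[X_1]_{2,4}=\wedge^2(kX_1)$ in internal degree $4$; this is the first assertion.

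Next I would compute $\Delta(R)$ directly from $k[X]$. By Remark~\ref{re:Koszul}, $H(k[X])\cong H(K^R)$ as bigraded algebras, so $\Delta(R)_i=H_i(k[X])_{2i}$. The subalgebra $k[X_1]\subseteq k[X]$ is just the exterior algebra $\wedge(kX_1)$ with zero differential (Remark~\ref{re:betti-deviations}), concentrated along the diagonal since $\deg(x)=2$ for $x\in X_1$; so $k[X_1]_{i,2i}=\wedge^i(kX_1)$. The plan is to show that in internal degree $2i$, the cycles in $k[X]_i$ all come from $k[X_1]$ and the boundaries are exactly $d(kX_3)\cdot k[X_1]$, i.e. the ideal generated by $d(X_3)$. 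For the cycles: a general monomial in $k[X]_{i,2i}$ involving some $x\in X_{\ge 2}$ would have an $X_1$-part of homological degree $\le i-2$ but would need internal degree $\ge 3+2(i-2)=2i-1$ from that part plus the degree-$i$ generator contributing internal degree $i+1\ge 3$; a bookkeeping argument with the constraint "$\deg\ge |\cdot|+1$ for generators of degree $\ge 2$, with equality forced" shows the only way to hit internal degree exactly $2i$ in homological degree $i$ is to use $i$ generators from $X_1$. So $k[X]_{i,2i}=\wedge^i(kX_1)=k[X_1]_{i,2i}$, and the differential vanishes on it, so $Z_i(k[X])_{2i}=k[X_1]_{i,2i}$. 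For the boundaries: $d(k[X]_{i+1,2i})$ — again by the same degree count, $k[X]_{i+1,2i}$ is spanned by monomials with exactly one factor from $X_3$ (internal degree $4$, homological degree $3$) and $i-2$ factors from $X_1$, so $d$ of such a monomial is $d(x_3)\wedge(\text{monomial in }\wedge^{i-2}kX_1)$, which runs over the degree-$2i$ part of the ideal $(d(X_3))\subseteq k[X_1]$.

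Assembling these, $\Delta(R)_i = Z_i(k[X])_{2i}/B_i(k[X])_{2i}\cong \wedge^i(kX_1)/\big((d(X_3))\cap \wedge^i(kX_1)\big)$, and summing over $i$ gives $\Delta(R)\cong k[X_1]/(d(X_3))$ as graded $k$-algebras; that the isomorphism is multiplicative is automatic because it is induced by the inclusion of the DG subalgebra $k[X_1]\hookrightarrow k[X]$ on cycles, which is an algebra map. I expect the main obstacle to be the degree bookkeeping that shows $k[X]_{i,2i}$ and $k[X]_{i+1,2i}$ have the claimed shape: one must carefully use that $\beta^Q_{i,j}(R)=0$ for $j>2i$ only holds for $i\le n$, and check that generators $x\in X_i$ with $i>n$ (which could have $\deg(x)>i+1$) cannot contribute to internal degree $2i$ in the relevant homological degrees — this is where the choice $n=\edim R-\depth R=\pdim_Q R$ is used, since it guarantees that in homological degrees $\le \pdim_Q R+1$ everything is controlled, and above that range $H(k[X])$ vanishes anyway so the diagonal is zero there. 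A clean way to organize this is to first establish, by induction on $i$, that $k[X]_{i,j}=0$ for $j>2i$ and $k[X]_{i,2i}=\wedge^i(kX_1)$, pushing the argument through using \eqref{eq:diff-model} and the known structure of $d$ on $X_1,X_2,X_3$.
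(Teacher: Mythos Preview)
Your approach is essentially the paper's: realize $\rho$ via the DG-subalgebra inclusion $k[X_1]\hookrightarrow k[X]$, use the constraint $\deg(x)=|x|+1$ on generators (coming from \eqref{eq:koszul-model}) to identify the diagonal $\bigoplus_i k[X]_{i,2i}$ with $k[X_1]$, and then read off the boundaries as the ideal generated by $d(X_3)$. The paper's proof proceeds exactly along these lines.

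One small slip in your bookkeeping: your description of $k[X]_{i+1,2i}$ is incomplete. Besides the monomials with one factor from $X_3$ and $i-2$ factors from $X_1$, there are also monomials $y_1y_2\cdot w$ with $y_1,y_2\in X_2$ and $w\in\wedge^{i-3}(kX_1)$; these have $|m|=2+2+(i-3)=i+1$ and $\deg(m)=3+3+2(i-3)=2i$. This does not affect your conclusion, since $d(X_2)=0$ in $k[X]$ and hence such monomials contribute nothing to the boundaries. The paper avoids this bookkeeping by noting $d(X_1)=d(X_2)=0$ up front and then computing $d(k[X])\cap k[X_1]$ directly, so that only monomials carrying at least one factor from $X_{\ge 3}$ need be examined. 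Your awareness of the delicate point about possible generators in $X_{>n}$ is apt; the paper's degree formula $\deg(m)=\sum(|x_t|+1)d_t$ is asserted for all monomials, which tacitly uses that the relevant bidegrees only involve $X_{\le n}$.
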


\begin{proof}
Since $d(X_{1})=0$, as noted in Remark~\ref{re:betti-deviations}, the differential on $k[X_{1}]$ is zero as well. Consider the inclusion $k[X_{1}]\subseteq k[X]$ of DG algebras. The map \eqref{eq:diagonal-map} is realized as the induced map on homology:
\[
\rho\colon  k[X_{1}]\lar \bigoplus_{i\geq 0}H_{i}(k[X])_{2i}\,.
\]
For any monomial $x_{1}^{d_{1}}\cdots x_{s}^{d_{s}}$ in $k[X]$ with $x_{n}\in X$, the hypothesis on regularity, in the form \eqref{eq:koszul-model}, yields
\[
\deg(x_{1}^{d_{1}}\cdots x_{s}^{d_{s}}) = \sum_{n=1}^{s} (|x_{n}| + 1)d_{n}\,.
\]
Given this, an elementary computation yields that $\deg(x_{1}^{d_{1}}\cdots x_{s}^{d_{s}})=2|x_{1}^{d_{1}}\cdots x_{s}^{d_{s}}|$ precisely when $|x_n|=1$ for $n=1,\ldots,s$. Thus, the diagonal subalgebra $\oplus_{i} k[X]_{i,2i}$ of  $k[X]$ is $k[X_{1}]$. Since $H(k[X])$ is subquotient of $k[X]$ and it follows that $\rho$ is surjective. 

To verify the claim about its kernel, it suffices to verify that there is an equality
\[
d(k[X])\cap k[X_{1}] = d(X_{3})k[X_{1}]\,.
\]
Again from Remark~\ref{re:betti-deviations} one gets $d(X_{2})=0$ and $d(X_{3})\subseteq k[X_{1}]$. Thus it suffices to consider the differential of monomials $x_{1}^{d_{1}}\cdots x_{s}^{d_{s}}$ where $|x_{n}|\geq 3$ for some $n$ in $1,\ldots,s$. For such a monomial, it is easy to verify that
\[
\deg(x_{1}^{d_{1}}\cdots x_{s}^{d_{s}})=2(|x_{1}^{d_{1}}\cdots x_{s}^{d_{s}}|-1)
\]
if and only if $|x_{n}|=3$ for exactly one $n$ in $1,\ldots,s$, and then $d_{n}=1$, and $x_{n}=1$ for the rest. This is the desired result.
\end{proof}

The following corollary will be useful in the sequel.

\begin{corollary}
\label{Co:diagonalCI}
Assume $\reg_{n+1}(R)=0$ for $n = \edim R - \depth R$.  If $\beta_{i,2i}^{Q}(R)\ge {\binom{\beta_{1}^{Q}(R)} i}$ for some $ 2\le i\le \beta_{1}^{Q}(R)$, then $R$ is a complete intersection. 
\end{corollary}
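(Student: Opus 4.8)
The plan is to exploit Theorem~\ref{th:delta}, which gives $\Delta(R)\cong k[X_1]/(d(X_3))$ as $k$-algebras once $\reg^R_{n+1}(k)=0$ for $n=\edim R-\depth R$. Since $k[X_1]$ is the exterior algebra $\bigwedge_k kX_1$ on $g:=\beta_1^Q(R)$ generators in homological degree $1$ and internal degree $2$, its $i$th graded piece $k[X_1]_{i,2i}=\bigwedge^i kX_1$ has rank exactly $\binom{g}{i}$. The map $\rho\colon \bigwedge^i kX_1\to \Delta(R)_i=H_i(K^R)_{2i}$ is surjective, so $\beta^Q_{i,2i}(R)=\rank_k H_i(K^R)_{2i}\le \binom g i$ always holds under the hypothesis; equality for a given $i$ forces $\rho$ to be injective in homological degree $i$, i.e. $(d(X_3))$ has no element of homological degree $i$. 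The first step is therefore to translate the hypothesis $\beta^Q_{i,2i}(R)\ge \binom g i$ into the statement that the ideal $(d(X_3))\subseteq \bigwedge_k kX_1$ vanishes in degree $i$ (and hence, by the second step below, in all degrees).

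The second step is to show that $(d(X_3))=0$ outright. The generators $d(X_3)$ live in $\bigwedge^2 kX_1$ by Theorem~\ref{th:delta}. In an exterior algebra on $g$ generators, multiplication by any fixed nonzero element $\omega\in\bigwedge^2 kX_1$ gives maps $\bigwedge^{j}kX_1\to\bigwedge^{j+2}kX_1$; if $(\omega)$ vanishes in homological degree $i$ but $\omega\neq 0$, one derives a contradiction for $2\le i\le g$. Concretely, if $\omega\neq 0$ then after a linear change of coordinates $\omega=x_1\wedge x_2+\cdots$ (a nonzero $2$-form), and one checks directly that $\omega\wedge\eta\neq 0$ for a suitable $(i-2)$-form $\eta$, using that $g\ge i\ge 2$ leaves enough variables; alternatively, invoke that in the exterior algebra the annihilator of a nonzero $2$-form in degrees $\le g-2$ is proper. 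Either way, $(d(X_3))$ vanishing in one degree $2\le i\le g$ forces every generator $d(x)$, $x\in X_3$, to be zero, hence $d(X_3)=0$ and $\Delta(R)\cong \bigwedge_k kX_1$ is the full exterior algebra.

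The third step is to conclude that $R$ is a complete intersection from $d(X_3)=0$. By Remark~\ref{re:betti-deviations}, $H_2(k[X])\cong kX_2\oplus(\wedge^2 kX_1/d(kX_3))$, so $d(X_3)=0$ means $\wedge^2 kX_1$ injects into $H_2(k[X])=H_2(K^R)$; combined with the general surjectivity of $\rho$ this says $\Delta(R)$ is the exterior algebra on $\Delta_1(R)$ with no relations. Now I would bring in Bruns's characterization of complete intersections via the product on Koszul homology, cited in the introduction (\cite{Br2}): an ideal $I$ with $g$ generators is a complete intersection if and only if the canonical map $\bigwedge_k H_1(K^R)_2\to H(K^R)$ is injective, equivalently the top exterior power $\bigwedge^g H_1(K^R)_2$ survives in $H_g(K^R)_{2g}$. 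Since we have just shown $\rho$ is an isomorphism onto $\Delta(R)$, in particular $\bigwedge^g kX_1\cong\Delta(R)_g\neq 0$ injects into $H_g(K^R)$, which is exactly Bruns's criterion, so $R=Q/I$ is a complete intersection.

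\textbf{Main obstacle.} The delicate point is the second step: ruling out a nonzero relation $\omega\in\bigwedge^2 kX_1$ whose principal ideal happens to vanish in the single degree $i$. The cleanest route is the linear-algebra fact that a nonzero $2$-form $\omega$ on a $g$-dimensional space has $\omega\wedge(-)\colon\bigwedge^{i-2}\to\bigwedge^i$ nonzero whenever $2\le i\le g$ — this is essentially the statement that the rank of a skew form, together with $i\le g$, leaves room for a non-annihilated $(i-2)$-form. I expect the write-up to hinge on choosing coordinates so that $\omega$ is a sum of $x_{2j-1}\wedge x_{2j}$ (the normal form for a $2$-form of rank $2r$) and exhibiting an explicit $(i-2)$-form not killed by $\omega$, which needs $g\ge i$; the hypothesis $i\le \beta_1^Q(R)=g$ is used precisely here, and the case $i=2$ (where $\eta$ is a scalar) is immediate.
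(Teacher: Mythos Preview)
Your proof is correct and follows the same route as the paper: invoke Theorem~\ref{th:delta} to identify $\Delta(R)\cong k[X_1]/(d(X_3))$, deduce from the hypothesis that $d(X_3)=0$ so that $\rho$ is injective, and conclude via Bruns's criterion \cite{Br2}*{Theorem~2} (equivalently \cite{BH}*{Theorem~2.3.14}).

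The one difference is that you spell out the exterior-algebra step the paper compresses into a single clause. The paper writes ``the stated inequality holds if and only if $d(X_3)=0$,'' silently using that in $\bigwedge_k kX_1$ a nonzero $2$-form generates an ideal that is nonzero in every degree $2\le i\le g$; your ``second step'' supplies exactly this argument. A slightly slicker justification than the normal-form computation you sketch: $\bigwedge_k kX_1$ is a Poincar\'e duality algebra with socle in degree $g$, so for $\omega\ne 0$ there is a monomial $\eta_0\in\bigwedge^{g-2}kX_1$ with $\omega\wedge\eta_0\ne 0$; any length-$(i-2)$ submonomial $\eta'$ of $\eta_0$ then satisfies $\omega\wedge\eta'\ne 0$, since $(\omega\wedge\eta')\wedge(\eta_0/\eta')=\pm\omega\wedge\eta_0\ne 0$. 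Either argument closes the gap, and the hypothesis $2\le i\le g$ is used precisely here, as you note. Your detour through Remark~\ref{re:betti-deviations} in the third step is unnecessary: once $d(X_3)=0$, Theorem~\ref{th:delta} already gives injectivity of $\rho$ directly, and since the regularity hypothesis forces $H_1(K^R)=H_1(K^R)_2$, this is the map in Bruns's theorem.
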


\begin{proof}
By Theorem \ref{th:delta} the stated inequality holds if and only if map $d(X_3) = 0$; equivalently, when the map $\rho$ is injective. A theorem of Bruns~\cite{Br2}*{Theorem~2}, see also \cite{BH}*{Theorem 2.3.14}, then yields that $R$ is complete intersection.
\end{proof}

Next we give an elementary description of $d(kX_{3})$ appearing in Theorem~\ref{th:delta}.  This is based on the construction of the model $Q[X]$, described in Remark~\ref{rem:model-steps}.

\begin{remark}
\label{re:delta}
The set up and hypotheses is as in Theorem~\ref{th:delta}. 

Suppose $X_{1}=\{x_{1},\dots,x_{g}\}$ and set $f_{i}=d(x_{i})$. By construction, $\bsf:=f_{1},\ldots,f_{g}$ is a minimal generating set for the ideal $\Ker(Q\to R)$, and $Q[X_{1}]$ is the Koszul complex on  $\bsf$. Let  $l_1, \ldots l_r$ be a  generating set for linear syzygies of $\bsf$; that is to say, for the $k$-vector space of cycles in $Q[X_{1}]_{1,3}$. Set $L=\sum_{i}Q_{1}l_{i}$, the $k$-vector subspace of cycles in $Q[X_{1}]_{1,4}$ that are generated by the linear syzygies. In the same vein, let $M$ be the $k$-vector subspace of $Q[X_{1}]_{1,4}$ spanned by the  syzygies $\{f_{i}x_{j}-f_{j}x_{i}\}$, where $1\leq i < j\leq g$. Thus $V = L \cap M$ is the $k$-vector space generated by the nonminimal Koszul syzygies. Choose a basis  $b_{1},\ldots, b_{p}$ of the $k$-vector space $V$ and write
\[
b_h =  \sum_{1\leqslant i < j\leqslant g} c_{hij} (f_{i}x_{j}-f_{j}x_{i})\,.
\]
It  follows from the construction of the minimal model that the image of $d$ is the ideal of $k[X_{1}]$ generated by the quadratic forms
\[
\sum_{ij} c_{hij} x_i x_j\quad h=1,\ldots,p.
\]
\end{remark}

\begin{remark}
\label{re:thick-diagonal}
By Remark~\ref{re:ACI}, the subalgebra $\Delta(R)$ of $H(K^{R})$ is generated by its linear part. The same is true of the larger subalgebra $\oplus_{j\ge 2i-1} H_{i,j}$; this  is the content of \cite{BDGMS}*{Theorem~3.1}. Its proof follows the lines of the argument for that of Theorem~\ref{th:delta}.
\end{remark}

\section{Algebras defined by three relations}
\label{sec:3relations}
In this section we investigate the Koszul homology of Koszul algebras defined by three relations, answering Question~\ref{qu:conca} along the way for this class of rings. We begin by collecting some observations which are surely well known. In what follows $R$ and $Q$ are usual; recall that $I=\Ker(Q\to R)$.

\begin{remark}
\label{re:Taylor}
Let $J$ be the initial ideal of $I$ with respect to a term ordering. The semicontinuity of flat degenerations yields inequalities
\begin{equation}
\label{eq:semicontinuity}
\beta_{i,j}(R) \leq \beta_{i,j}(Q/J) \quad\text{for all $i,j$.}
\end{equation}
The ideal $J$ is monomial and its Taylor resolution~\cite{Pe2}*{Construction~26.5} yields 
\[
\beta_{i}^{Q}(Q/J)\leq \binom{\beta_{1}^{Q}(Q/J)}i\,.
\]
Combining the preceding inequalities yields an upper bound for the Betti numbers of $R$ over $Q$ in the spirit of Question~\ref{qu:conca}, and settles it in a special case, namely, when $k$-algebra $R$ is \emph{G-quadratic}; that is to say, when for some choice of term order the initial ideal $J$ is generated by quadratics. In that case $\beta_{1}^{Q}(Q/J)=\beta_{1}^{Q}(R)$, so the estimates above settle Question~\ref{qu:conca}.   Since the Betti numbers of $R$ over $Q$ are invariant under change of coordinates, and quotient by a regular sequence of linear forms, Question~\ref{qu:conca} has an affirmative answer  when $R$ is \emph{LG-quadratic}; see the paragraph preceding~\cite{ACI1}*{Question~6.5} for details.
\end{remark}

\begin{proposition}
\label{pr:initial-ideal}
Assume $I$ is generated by quadrics and set $n:=\beta^{Q}_{1,2}(R)$. The following statements hold.
\begin{enumerate}[\quad\rm(1)]
\item
$\beta^{Q}_{i,i+1}(R) \leq {\binom n i}$ for each $i$, and if equality holds for $ i=2$, then $I$ has codimension one and a linear resolution of length $n$;
\item
$\beta_{i,i+1}^{Q}(R) \leq 2$ for some $i\ge 2$ implies $\beta_{i+1,i+2}^{Q}(R) = 0$;
\item
$\beta^{Q}_{2}(R)\leq 2 \binom n2$ when $\reg^{R}_{3}(k)=0$.
\end{enumerate}
When $J$ is the initial ideal of $I$ with respect to a term ordering and $m:=\beta^{Q}_{1,3}(J)$, the following inequalities hold
\begin{enumerate}[\quad\rm(1)]
\setcounter{enumi}{3}
\item
$\beta^{Q}_{2,3}(R) + \beta^{Q}_{2,4}(R) \leq \binom{n + m}2$;
\item
$\beta^{Q}_{2,3}(R) + m\leq \binom{n}2$.
\end{enumerate}
\end{proposition}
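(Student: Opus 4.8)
The plan is to prove parts (1)--(5) of Proposition~\ref{pr:initial-ideal}, all of which constrain the second row of the Betti table of $R$ over $Q$ when $I$ is generated by quadrics, via a single geometric mechanism. Let me address the parts in turn, focusing on where the real content lies.

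\textbf{Parts (1) and (2).}
Since $I$ is generated by quadrics, $\reg^{Q}_{2}(k)$ considerations do not directly apply, but I would instead pass to the \emph{initial complex} directly: by Remark~\ref{re:Taylor} and the semicontinuity inequality \eqref{eq:semicontinuity}, for the linear strand one has $\beta^{Q}_{i,i+1}(R)\le\beta^{Q}_{i,i+1}(Q/J)$ where $J$ is the initial ideal. The linear strand of $Q/J$ is governed by the quadratic part of $J$, whose minimal generators number at most $n = \beta^{Q}_{1,2}(R)$ (here one uses that the number of degree-$2$ initial monomials equals $\beta^{Q}_{1,2}(R)$, since degree is preserved under Gr\"obner degeneration); the Taylor/Eliahou--Kervaire bound then gives $\beta^{Q}_{i,i+1}(Q/J)\le\binom{n}{i}$. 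This yields (1). For the equality case in (1), equality $\beta^{Q}_{2,3}(R)=\binom n2$ forces equality throughout, in particular the $n$ quadrics have \emph{all} $\binom n2$ Koszul relations as minimal linear syzygies; comparing with Remark~\ref{re:delta}, the space $V=L\cap M$ of nonminimal Koszul syzygies must be zero, and one deduces via a codimension count (two general quadrics in an ideal of codimension $\ge 2$ already produce a nonminimal Koszul syzygy after one step) that $\Ht I = 1$; the linear resolution of length $n$ then follows from the structure of ideals with linear resolution of low codimension (a $1$-codimensional ideal with linear resolution is, up to the principal part, an ideal with linear resolution of a module of projective dimension $n$). Part (2) is the standard ``no gaps'' / rigidity statement for linear strands: if $\beta^{Q}_{i,i+1}(R)\le 2$ then the linear part of the $i$th syzygy module is spanned by at most two linear forms' worth of relations, and a Koszul-complex / Green-type argument (or again reduction to the monomial case via \eqref{eq:semicontinuity} and the combinatorics of Eliahou--Kervaire, where a linear strand that drops to $\le 2$ must terminate) forces $\beta^{Q}_{i+1,i+2}(R)=0$.

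\textbf{Part (3).}
This is the part that genuinely uses the hypothesis $\reg^{R}_{3}(k)=0$ and the machinery of Sections~\ref{se:models}--\ref{se:koszul-homology}. When $\reg^{R}_{3}(k)=0$, Remark~\ref{re:ACI}(1) gives $\beta^{Q}_{2,j}(R)=0$ for $j>4$, so $\beta^{Q}_{2}(R)=\beta^{Q}_{2,3}(R)+\beta^{Q}_{2,4}(R)$. The diagonal piece $\beta^{Q}_{2,4}(R)=\dim_{k}\Delta(R)_{2}$ is bounded by $\binom n2$ by Remark~\ref{re:ACI}(2) (it is a quotient of $\wedge^2 H_1(R)_2$, and $\dim_{k}H_1(R)_2 = \beta^{Q}_{1,2}(R)=n$ since $I$ is quadric-generated). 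Combined with $\beta^{Q}_{2,3}(R)\le\binom n2$ from (1), this gives $\beta^{Q}_{2}(R)\le 2\binom n2$.

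\textbf{Parts (4) and (5).}
Here $J$ is an arbitrary initial ideal and $m=\beta^{Q}_{1,3}(J)$, the number of cubic minimal generators of $J$ (note $\beta^{Q}_{1,2}(J)=n$ and there can be no linear generators since $I\subseteq Q_{\ge 2}$). For (4): by \eqref{eq:semicontinuity}, $\beta^{Q}_{2,3}(R)+\beta^{Q}_{2,4}(R)\le\beta^{Q}_{2,3}(Q/J)+\beta^{Q}_{2,4}(Q/J)$, and the right-hand side counts first syzygies of $J$ in degrees $3$ and $4$; since $J$ is monomial, the Taylor resolution expresses all first syzygies as pairs of generators, and the generators of $J$ contributing to syzygies in degrees $\le 4$ are exactly the $n$ quadrics and the $m$ cubics (a syzygy in degree $4$ between two generators needs their lcm to have degree $\le 4$, hence both have degree $\le 3$). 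Thus $\beta^{Q}_{2,3}(Q/J)+\beta^{Q}_{2,4}(Q/J)\le\binom{n+m}{2}$, which is (4). Part (5) is a refinement: $\beta^{Q}_{2,3}(Q/J)$ counts degree-$3$ syzygies, which can only come from pairs of quadrics, and moreover each quadric can be involved in at most \ldots{} — more precisely, I would argue that the $m$ cubic \emph{generators} of $J$ each ``use up'' a potential degree-$3$ syzygy, i.e.\ there is an injection from a set accounting for $\beta^{Q}_{2,3}(R)+m$ into the $\binom n2$ pairs of quadrics: a cubic minimal generator $x_{i}\cdot(\text{quadric})$ of $J$ arises precisely because a degree-$3$ monomial is a non-minimal combination, and counting such monomials against the Koszul-type relations among the $n$ quadric generators produces the inequality $\beta^{Q}_{2,3}(R)+m\le\binom n2$. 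This last bookkeeping — showing the cubic generators and the degree-$3$ first syzygies of $Q/J$ together do not exceed $\binom n2$ — is the step I expect to be the main obstacle, since it requires a careful analysis of how the monomial generation process in degree $3$ interacts with the Taylor complex rather than a soft dimension count; the cleanest route is probably to write down the degree-$3$ piece of the minimal free resolution of $Q/J$ explicitly (generators in degree $\le 3$, the relevant Taylor faces) and observe that the Euler-characteristic/exactness in internal degree $3$ forces $n - \beta^{Q}_{2,3}(Q/J) \ge m$ after accounting for which quadratic products are new generators versus syzygies.
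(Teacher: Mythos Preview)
Your arguments for part~(3), part~(4), and the first inequality in part~(1) are essentially the paper's arguments and are fine. The remaining pieces have real problems.

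\textbf{Part~(2) is a genuine gap.} Your proposed reduction to the monomial case cannot work: semicontinuity gives $\beta^{Q}_{i,i+1}(R)\le\beta^{Q}_{i,i+1}(Q/J)$, so the hypothesis $\beta^{Q}_{i,i+1}(R)\le 2$ says nothing about $\beta^{Q}_{i,i+1}(Q/J)$, and you have no way to invoke any monomial combinatorics. Nor is there a ``standard rigidity'' statement of this shape to quote. The paper proves (2) by a direct elementary argument on the matrix of linear forms: write the relevant piece of the linear strand as $R(-i-1)^{c}\xrightarrow{\phi}R(-i)^{b}\xrightarrow{\psi}\cdots$ with $c\le 2$, and show $\phi$ has no linear syzygy. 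For $c=2$, a linear syzygy would mean $r\cdot[\text{col}_1]=s\cdot[\text{col}_2]$ for linear forms $r\ne s$; unique factorization then forces each column to be a scalar vector times a single linear form, and since $i\ge 2$ the image of $\psi$ is torsion-free, so those scalar vectors would be degree-zero syzygies of $\psi$, contradicting minimality. This is where the content lies, and your sketch does not supply it.

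\textbf{The equality case in (1) is muddled.} The Koszul relations $f_ix_j-f_jx_i$ live in degree~$4$, not degree~$3$, so the claim that $\beta^{Q}_{2,3}(R)=\binom n2$ makes ``all $\binom n2$ Koszul relations minimal linear syzygies'' conflates two different degrees; and your conclusion that $V=L\cap M$ vanishes is backwards (many linear syzygies would tend to make $V$ larger, not smaller). The paper instead stays with the initial ideal: from the consecutive-cancellation identity $\beta^{Q}_{2,3}(R)=\beta^{Q}_{2,3}(Q/J)-\beta^{Q}_{1,3}(Q/J)$ one gets $\beta^{Q}_{1,3}(Q/J)=0$, hence by Buchberger $J=J_{2}$; then $\beta^{Q}_{2,3}(Q/J)=\binom n2$ forces every pair of the $n$ quadratic monomials in $J$ to share a variable while no triple lcm collapses, which pins down a common linear factor and gives $\Ht I=1$ and the linear resolution.

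\textbf{Part~(5): you are circling the right idea but not landing it.} The clean argument is the same consecutive-cancellation identity used above: since $\beta^{Q}_{1,3}(R)=0$ and $\beta^{Q}_{3,3}(Q/J)=0$, all $m$ entries in position $(1,3)$ of the Betti table of $Q/J$ must cancel with position $(2,3)$, giving $\beta^{Q}_{2,3}(Q/J)=\beta^{Q}_{2,3}(R)+m$. Now combine with the bound $\beta^{Q}_{2,3}(Q/J)\le\binom n2$ already established in (1) (the linear strand of $Q/J$ depends only on $J_{2}$), and (5) follows. Your ``Euler characteristic in internal degree~$3$'' is a proxy for this cancellation, but you should state it via Peeva's consecutive cancellations rather than attempt an ad hoc injection.
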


\begin{proof}
For ease of notation we write $\beta(-)$ for $\beta^{Q}(-)$. 

(2): Set $b:= \beta_{i-1,i}(R)$ and  $c:= \beta_{i,i+1}(R)$. Choosing bases, the linear strand of the resolution of $R$ over $Q$ in homological degrees $i$ and $i+1$ reads 
\[
\xymatrix{  \cdots \ar[r] & R(-i-1)^c \ar[r]^\phi & R(-i)^b \ar[r]^{\psi} &  \cdots}\,,
\]
where $\phi$ is a $b\times c$ matrix of linear forms.  The hypothesis is that $c\leq 2$ and  the desired conclusion is that $\phi$ has no linear syzygies.  This is clear when $c = 1$.  

Assume $c =2$. A linear syzygy of $\phi$ is a linear dependence among its two columns. This means that, with $[l_1, \ldots, l_b]$ and $[m_1, \ldots, m_b]$ denoting the columns, there exist distinct linear forms $r,s$  such that 
\[
r[l_1, \ldots, l_b] = s[m_1, \ldots, m_b]
\]
Now, whenever $l_i$ is nonzero, so must $m_i$ (and vice-versa).  Then since $r\neq s$ we have that $r | m_i$ and $s | l_i$ for each $i$.  But this implies that 
\begin{gather*}
[l_1, \ldots,  l_b] = s[l'_1, \ldots, l'_b] \\
[m_1, \ldots, m_b] = r[m'_1,\ldots, m'_b] 
\end{gather*}
with $l'_{i}$ and $m'_{j}$ in $k$. Since $i\ge 2$, the image of $\psi$ is a submodule of a free module, so it follows that $[l'_i]$ and $[m'_i]$ are themselves syzygies of $\psi$, which is absurd. 

\medskip

The initial ideal $J$ of $I$ plays a role in the remainder of the proof. We repeatedly use, without comment, \eqref{eq:semicontinuity}.

\medskip

(1): From Remark~\ref{re:Taylor} one gets the inequalities below:
\[
\beta_{i,i+1}(R) \leq \beta_{i,i+1}(Q/J) = \beta_{i,i+1}(Q/J_{2} ) \leq \beta_{i}(Q/J_{2})\leq \binom{\beta_1(Q/J_{2})} i = \binom ni\,.
\]
The last equality holds as $\beta_{1,2}(Q/J_{2})=\beta_{1,2}(R)$.   This justifies the first claim in (1). 

Suppose that $\beta_{2,3}(R)=\binom n2$; we may assume $n\ge 2$. One then has equalities
\[
\beta_{2,3}(R)=\beta_{2,3}(Q/J) = \binom n2\,.
\]
By \cite{Pe2}*{Theorem~22.12}, as $I$ is generated by quadrics, there is an equality 
\[
\beta_{2,3}(R) = \beta_{2,3}(Q/J)- \beta_{1,3}(Q/J)\,,
\]
and hence $\beta_{1,3}(Q/J)=0$. Given Buchberger's algorithm, it follows that $J$ is generated in degree two, that is to say, $J=J_{2}$.
In particular, $\beta_{1}(Q/J)=n$. Given this and the equality $\beta_{2,3}(Q/J) = \binom n2$ one deduces that the differential in the Taylor resolution of $Q/J$ is minimal  (meaning, it has coefficients in $Q_{\ges 1}$) in degrees $2$ and $3$. Given the description of the differential in the Taylor resolution, it follows that any pair of monomial generators of $J$ has a common factor, and that the l.c.m.\ of any three of the generators is not equal to the l.c.m.\ of a proper subset. A simple computation then yields that the g.c.d.\ of the generators has degree one and thus $J$, and hence also $I$, has codimension one. Since $I$ is generated by quadrics, it must then be of the form $f(g_{1},\dots,g_{n})$, where $f$ is a linear form and $g_{1},\dots,g_{n}$ is a regular sequence of linear forms. This implies that $I$ has a linear resolution of length $n$.

\medskip

(3): Assume $\reg^{R}_{3}(k)=0$. Then $\beta_{2,j}(R)=0$ for $j\ne 3,4$, by Remark~\ref{re:ACI}(1), so one gets the equality below
\[
\beta_{2}(R) = \beta_{2,3}(R) + \beta_{2,4}(R) \le \binom n2 + \binom n2\,.
\]
The inequalities hold by (1) above and  Remark~\ref{re:ACI}(2), which yields $\beta_{2,4}(R) \leq \binom n2$.

\medskip

(4): This is by Remark~\ref{re:Taylor} as $\beta_{2,3}(Q/J)$ and $\beta_{2,4}(Q/J)$ depend only on $J_{\les 3}$.

\medskip

(5): Since $Q/J$ is a flat degeneration of $R$, the Betti table of $R$ is obtained from that of $Q/J$ via consecutive cancellations; see Peeva~\cite{Pe} and \cite{Pe2}*{Theorem~22.12}.  Thus one has

\medskip

\hbox{
\vbox{
\begin{tabular}{c|clclcl}
$\beta(R)$&0&1&2&3&$\cdots$\\
\hline
0&1&--&--&--&$\cdots$\\
1&--&$n$&$c$&$?$&$\cdots$\\
2&--&--&$d$&$?$&$\cdots$ \\
3&$\cdots$&$\cdots$&$\cdots$&$\cdots$&$\cdots$
\end{tabular}
}
\hskip-200pt
\vbox{
\begin{tabular}{c|clclcl}
$\beta(Q/J)$&0&1&2&3& $\cdots$\\
\hline
0&1&--&--&--&$\cdots$\\
1&--&$n$&$c+m$&$?$&$\cdots$\\
2&--&$m$&$d+\nu$ &$?$&$\cdots$\\
3&$\cdots$&$\cdots$&$\cdots$&$\cdots$&$\cdots$
\end{tabular}
}}
\medskip

This justifies the inequality  in (5).
\end{proof}

\begin{lemma}
\label{le:projdim3}
Assume that $R$ is defined by $3$ quadratic relations.  If $\sum_{i\geq 4} \beta_{2,i}^{Q}(R) \leq 2$ then the projective dimension of $R$ is at most $3$.
\end{lemma}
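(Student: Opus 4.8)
The plan is to split according to the codimension of $I$, and to settle the only substantial case — $\Ht I=2$, where $R$ is an almost complete intersection — by combining the Eisenbud--Huneke bound $\pdim_Q R\le 4$ with the structure of the Koszul homology of $R$. When $\Ht I\le 1$, after a linear change of variables and dividing out the greatest common divisor of the three quadrics, $I$ is either principal or of the form $\ell\cdot(\ell_1,\dots,\ell_r)$ with $\ell,\ell_1,\dots,\ell_r$ linear forms, and in either case a direct computation gives $\pdim_Q R\le 3$. When $\Ht I=3$ the three generators of $I$ form a regular sequence over the regular ring $Q$, so $R$ is a complete intersection and $\pdim_Q R=3$. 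So I may assume $\Ht I=2$; since $\beta_1^Q(R)=3$, this says precisely that $R$ is an almost complete intersection, and by the Eisenbud--Huneke bound recalled in the introduction it remains only to prove $\beta_4^Q(R)=0$.

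For this I would use the Koszul complex $\mathbf K$ on the three quadrics, i.e.\ the subalgebra $Q[X_1]$ of the minimal model. Because $\Ht I=2$ one has $H_i(\mathbf K)=0$ for $i\ge 2$ and $H_1(\mathbf K)\cong\omega_R$ (Kunz, as used in the proof of Theorem~\ref{th:aci}). Writing $Z=\Ker(\mathbf K_1\to\mathbf K_0)$ for the module of first syzygies of $I$ and $B=\operatorname{im}(\mathbf K_2\to\mathbf K_1)$, one has the exact sequences $0\to B\to Z\to\omega_R\to 0$ and $0\to Z\to Q(-2)^3\to Q\to R\to 0$, and $B$ has the minimal free resolution $0\to Q(-6)\to Q(-4)^3\to B\to 0$; in particular $\pdim_Q B=1$ and $\Tor^Q_\ast(B,k)$ is concentrated in homological degrees $0,1$ and internal degrees $4,6$. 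From the second sequence $\beta^Q_{i+2}(R)=\beta^Q_i(Z)$ for all $i$, so $\pdim_Q R\le 3$ if and only if $\pdim_Q Z\le 1$; chasing the long exact Tor sequence attached to the first, this is equivalent to $\beta^Q_2(\omega_R)\le 1$. The same chase, together with $\pdim_Q R\le 4$, forces $\pdim_Q\omega_R=2$, so $\omega_R$ is a perfect $Q$-module of codimension two.

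Finally the hypothesis has to be fed in. Comparing the two long exact sequences of Tor, the graded Betti numbers of $R$ and of $\omega_R$ over $Q$ differ only through contributions of $\Tor^Q_\ast(B,k)$, which sit only in internal degrees $4$ and $6$; the bookkeeping gives on the one hand $\beta^Q_4(R)=\beta^Q_2(\omega_R)$ up to a correction of size at most $1$ coming from the connecting map $\Tor^Q_2(\omega_R,k)_6\to\Tor^Q_1(B,k)_6\cong k$, and on the other hand that $\sum_{i\ge 4}\beta^Q_{2,i}(R)$ equals $\sum_{j\ge 4}\beta^Q_{0,j}(\omega_R)$ plus the number of the three Koszul syzygies $f_ix_j-f_jx_i$ that remain minimal generators of $Z$. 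Since $\omega_R$ is perfect of codimension two, its minimal free resolution is self-dual, whence $\beta^Q_2(\omega_R)=\beta^Q_0\bigl(\Ext^2_Q(\omega_R,Q)\bigr)$; as $\Ext^2_Q(\Ext^2_Q(R,Q),Q)\cong R$ exactly when $R$ is Cohen--Macaulay and is a strictly larger module otherwise, one gets $\beta^Q_2(\omega_R)\ge 2$ whenever $R$ is not Cohen--Macaulay, and then $\beta^Q_4(R)\ge 1$. Thus $\sum_{i\ge 4}\beta^Q_{2,i}(R)\le 2$ is incompatible with $R$ failing to be Cohen--Macaulay; and if $R$ is Cohen--Macaulay then $\pdim_Q R=\Ht I=2\le 3$.

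The main obstacle is the last paragraph: carrying out the degree bookkeeping along the long exact sequences of Tor (the point that makes everything work is that $\Tor^Q_1(B,k)$ sits only in internal degree $6$), and establishing the dichotomy for a codimension-two almost complete intersection defined by quadrics that it is either Cohen--Macaulay or has $\beta^Q_4(R)\ge 1$ — equivalently, that $\omega_R$ carries enough extra generators (or the perfect-module duality produces $\beta^Q_2(\omega_R)\ge 2$) to violate the hypothesis. An alternative route to this crux is to combine the identities $\mu(\omega_R)=\ee_3(R)=\beta_2^Q(R)-3+\operatorname{rank}\bigl(\Tor^Q_1(\omega_R,k)\to\Tor^Q_0(B,k)\bigr)$ with the inequality $\ee_3(R)\le\ee_4(R)$ from Theorem~\ref{th:aci}, and then track the internal degrees of the connecting maps to deduce $\beta^Q_2(\omega_R)\le 1$.
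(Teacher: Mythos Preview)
Your approach is much more elaborate than the paper's and, as you yourself acknowledge, incomplete at the crucial step.

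The paper's proof is three lines and avoids the case split on $\Ht I$ altogether. By Proposition~\ref{pr:initial-ideal}(1) one has $\beta_{2,3}^{Q}(R)\le \binom{3}{2}=3$, and when equality holds $I$ has a linear resolution of length $3$. Otherwise $\beta_{2,3}^{Q}(R)\le 2$, and combined with the hypothesis this gives $\beta_{2}^{Q}(R)\le 4$. Now the Evans--Griffith Syzygy Theorem (see~\cite{BH}*{Theorem~9.5.6}) finishes: the third syzygy $\Omega_{3}(R)$ has rank $\beta_{2}-\beta_{1}+\beta_{0}=\beta_{2}-2\le 2<3$, so it must be free, whence $\pdim_{Q}R\le 3$.

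The genuine gap in your argument is the last paragraph. Two claims there do not hold as stated. First, a perfect $Q$-module of codimension two does \emph{not} in general have a self-dual minimal resolution; dualizing gives a minimal resolution of $\Ext^{2}_{Q}(\omega_{R},Q)$, so what you get is $\beta^{Q}_{2}(\omega_{R})=\beta^{Q}_{0}(\Ext^{2}_{Q}(\omega_{R},Q))$, but you then need to control the number of generators of $\Ext^{2}_{Q}(\omega_{R},Q)$. Second, the assertion that $\Ext^{2}_{Q}(\Ext^{2}_{Q}(R,Q),Q)$ is ``strictly larger'' than $R$ when $R$ is not Cohen--Macaulay does not translate into $\beta_{0}\ge 2$: a module can fail to be cyclic in many ways, and the biduality map $R\to\Ext^{2}_{Q}(\omega_{R},Q)$ being a non-isomorphism says nothing directly about minimal generators. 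More fundamentally, you never actually close the loop between the hypothesis $\sum_{i\ge 4}\beta^{Q}_{2,i}(R)\le 2$ and the inequality you need on $\beta^{Q}_{2}(\omega_{R})$; the ``degree bookkeeping'' you allude to is exactly the missing content.

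Even your preliminary case split has a rough edge: the treatment of $\Ht I=1$ is fine in outline (one does get $I=\ell\cdot(\ell_{1},\ell_{2},\ell_{3})$ with independent linear forms, and this ideal has a linear resolution of length~$3$), but it is worth noting that this is precisely the equality case of Proposition~\ref{pr:initial-ideal}(1), so the paper's argument subsumes it without a separate analysis.
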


\begin{proof}
As before, we write $\beta_{i,j}$ for $\beta^{Q}_{i,j}(R)$. Proposition \ref{pr:initial-ideal} (1) yields $\beta_{2,3}\leq 3$ and also that when equality holds $\pdim_{Q}R=3$.  When $\beta_{2,3}\leq 2$ the hypothesis yields $\beta_2 \leq 4$, so $\pdim_{Q}R\leq 3$, by the Syzygy Theorem~\cite{BH}*{Theorem 9.5.6}.
\end{proof}

In what follows we prove the following are the only possible Betti tables for Koszul algebras defined by three relations. The first one is the Betti table of a complete intersection of three quadrics; the second is that of the ring $k[x,y,z]/(x^{2},y^{2},xz)$; the third is defined by the ideal of minors of a $3\times 2$ matrix of linear forms, and of rank two; for example, $k[x,y]/(x,y)^{2}$. The last is the Betti table of an ideal with linear resolution; for example, $k[x,y,z]/(x^{2},xy,xz)$.

\begin{figure}[ht]
\medskip

{\small
\hskip10pt
\begin{tabular}{c|clclclc}
&0&1&2&3\\
\hline
0&1&--&--&--\\
1&--&3&--&--\\
2&--&--&3&--\\
3&--&--&--&1
\end{tabular}
\hskip20pt
\begin{tabular}{c|clclcl}
&0&1&2&3\\
\hline
0&1&--&--&--\\
1&--&3&1&--\\
2&--&--&2&1 
\end{tabular}
\hskip20pt
\begin{tabular}{c|clclcl}
&0&1&2\\
\hline
0&1&--&--\\
1&--&3&2  
\end{tabular}
\hskip20pt
\begin{tabular}{c|clclcl}
&0&1&2&3\\
\hline
0&1&--&--&--\\
1&--&3&3&1 
\end{tabular}
}
\medskip 
\caption{The Betti tables for Koszul algebras defined by three relations}
\end{figure}

\begin{remark}
\label{re:DAli}
Assume $\edim R=3$. It follows from  D'Al\`{i}'s classification~ ~\cite{Ali}*{Theorem 3.1} of quadratic algebras  that the ones that are \emph{not} Koszul have Betti table 
\[
\begin{tabular}{c|clclcl}
&0&1&2&3\\
\hline
0&1&--&--&--\\
1&--&3&--&-- \\ 
2&--&--&4&2  \\
\end{tabular}.
\]
This remark is used in proving (3)$\Rightarrow$(1) and (4)$\Rightarrow$(1) in the result below.
\end{remark}

\begin{theorem}
When $R$ is generated by $3$ quadrics, the following conditions are equivalent.
\begin{enumerate}[\quad\rm(1)]
\item $R$ is Koszul;
\item $\reg^{R}_{n+1}(k)=0$ for $n=\edim R - \dim R$;
\item The Betti table for $R$ over $Q$ is one of those listed in Figure 1.
\item  $H_{*}(K^{R})$ is generated, as a $k$-algebra, by its linear strand. 
\end{enumerate}
\end{theorem}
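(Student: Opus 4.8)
The plan is to prove the equivalences by the cycle $(1)\Rightarrow(2)\Rightarrow(3)\Rightarrow(4)\Rightarrow(1)$, with $(2)\Rightarrow(3)$ carrying the bulk of the work. The implication $(1)\Rightarrow(2)$ is immediate from the definition of partial regularity, since Koszulness means $\reg^R_n(k)\le 0$ for all $n$, which forces $\reg^R_{n+1}(k)=0$. For $(4)\Rightarrow(1)$ I would use Backelin--Fröberg--Conca when $\edim R\le 3$ (invoking Remark~\ref{re:DAli}, whose exceptional non-Koszul table is \emph{not} generated in the linear strand, so that case is forced into Koszulness); and when $\edim R\ge 4$, reduce modulo a maximal regular sequence of linear forms to assume $R$ is Artinian or close to it, and then appeal to the standard fact that if $H_*(K^R)$ is generated by $H_1$ then the Golod/Koszul dichotomy together with the three-relation hypothesis pins things down—more carefully, I expect $(4)$ to force $d(X_3)\subseteq k[X_1]_{2,4}$ directly, so that the only deviations that can appear beyond homological degree $1$ are accounted for, and $R$ is Koszul by the models criterion \eqref{eq:koszul-model}.

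The heart of the matter is $(2)\Rightarrow(3)$. Granting (2), Theorem~\ref{th:delta} applies, so $\Delta(R)\cong k[X_1]/(d(X_3))$ with $X_1=\{x_1,x_2,x_3\}$, and $d(X_3)\subseteq k[X_1]_{2,4}=\wedge^2 k X_1$. Thus $d(X_3)$ is spanned by some subset of the three quadratic monomials $x_1x_2,x_1x_3,x_2x_3$, and Remark~\ref{re:delta} identifies these quadrics explicitly via the nonminimal Koszul syzygies $V=L\cap M$. The case analysis is on $r:=\dim_k d(kX_3)\in\{0,1,2,3\}$, equivalently on $\beta^Q_{2,4}(R)=3-r$ versus what Proposition~\ref{pr:betti-deviations} and Proposition~\ref{pr:initial-ideal} allow. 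First, Corollary~\ref{Co:diagonalCI}: if $r=0$ then $\beta^Q_{2,4}(R)=\binom32$, so $R$ is a complete intersection of three quadrics, giving the first table. When $r\ge 1$, the three quadrics defining $R$ have a linear syzygy (the relation $\sum c_{1ij}x_ix_j$ lifts to an actual syzygy of the $f_i$ with a linear entry), and I would run the numerics: Proposition~\ref{pr:initial-ideal}(1) gives $\beta_{2,3}\le\binom n2$ with $n=\beta^Q_{1,2}(R)=3$, so $\beta_{2,3}\le 3$ with equality forcing codimension one and a linear resolution — that is the fourth table. Otherwise $\beta_{2,3}\le 2$, and then Lemma~\ref{le:projdim3} (its hypothesis $\sum_{i\ge4}\beta_{2,i}=\beta_{2,4}=3-r\le 2$ holds as soon as $r\ge1$) yields $\pdim_Q R\le 3$; combined with Proposition~\ref{pr:betti-deviations}, Theorem~\ref{th:aci} on almost complete intersections, and the constraint $\binom 32-\beta_2^Q(R)$, one whittles the Betti table down to the remaining two possibilities (the $k[x,y,z]/(x^2,y^2,xz)$ table and the $2\times3$-minors table), using $(3)\Rightarrow(1)$-style rigidity of the tables under consecutive cancellation to exclude spurious entries further to the right, and Proposition~\ref{pr:initial-ideal}(2) to propagate vanishing down the linear strand.

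The remaining implication $(3)\Rightarrow(4)$ is a direct inspection: for each of the four tables in Figure~1 one checks that $H_*(K^R)$ is generated by $H_1(K^R)$. For the complete intersection this is classical (the Koszul homology is an exterior algebra on $H_1$); for the linear-resolution table and the minors table the ring has $\edim\le 3$ and the statement can be read off the explicit small resolutions; for the middle table $k[x,y,z]/(x^2,y^2,xz)$ one computes products in $K^R$ by hand. Alternatively, and more uniformly, Remark~\ref{re:thick-diagonal} shows $\oplus_{j\ge 2i-1}H_{i,j}(K^R)$ is generated by its linear part whenever the relevant partial regularity vanishes, and table-by-table the nonzero Koszul homology all lies in this ``thick diagonal'' region — so $(3)\Rightarrow(4)$ follows once one knows $(3)\Rightarrow(2)$ (which is visible from each table, since $\ee_{ij}$ is read off via Lemma~\ref{le:deviations-homology} and the tables show no off-linear entries in low degree). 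The main obstacle I anticipate is the bookkeeping in $(2)\Rightarrow(3)$ when $r=1$ or $r=2$: separating the "codimension one / linear resolution" branch from the "$2\times3$ minors" branch and from the $k[x,y,z]/(x^2,y^2,xz)$ branch requires carefully matching the explicit quadrics of Remark~\ref{re:delta} against the possible syzygy patterns of three quadrics, and ruling out hypothetical tables with nonzero entries in columns $\ge 3$ beyond what consecutive cancellation from a monomial initial ideal permits.
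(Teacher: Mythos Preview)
Your cycle $(1)\Rightarrow(2)\Rightarrow(3)\Rightarrow(4)\Rightarrow(1)$ diverges from the paper's $(1)\Rightarrow(2)\Rightarrow(3)\Rightarrow(1)$ plus $(1)\Leftrightarrow(4)$, and the divergence exposes two genuine gaps. First, in $(4)\Rightarrow(1)$: reducing modulo a maximal regular sequence of linear forms does \emph{not} bring you down to $\edim R\le 3$ unless you already know $\pdim_Q R\le 3$ (via Auslander--Buchsbaum, $\edim R-\depth R=\pdim_Q R$). The paper proves $\pdim_Q R\le 3$ from (4) by a concrete argument you do not have: if $\pdim_Q R\ge 4$ then the Syzygy Theorem forces $\beta_2=5$, hence $\beta_{2,3}=2$ and $\beta_{2,4}=3$; Proposition~\ref{pr:initial-ideal}(5) then gives the initial ideal $J$ exactly one cubic generator, and comparing against Proposition~\ref{pr:initial-ideal}(4) yields a contradiction. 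Your sentence ``expect (4) to force $d(X_3)\subseteq k[X_1]_{2,4}$ directly'' is not an argument---condition (4) is about $H_*(K^R)$, and does not transparently control the differential in the model or the deviations. Second, in your uniform route for $(3)\Rightarrow(4)$ you claim $(3)\Rightarrow(2)$ because ``$\ee_{ij}$ is read off via Lemma~\ref{le:deviations-homology} and the tables show no off-linear entries''. That lemma computes $\ee_{i+2,j}$ as $\beta^R_{0,j}$ of $H_i(Q[X_{\le i}])$, not as a Betti number of $R$ over $Q$; you cannot read deviations from the tables in Figure~1. The paper sidesteps this by proving $(3)\Rightarrow(1)$ via reduction to $\edim\le 3$ and D'Al\`{i}'s classification, and only then deducing (4) from (1)+(3) via Remark~\ref{re:thick-diagonal}.

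Your outline of $(2)\Rightarrow(3)$ is close to the paper's, with one substantive difference. You and the paper agree on: Corollary~\ref{Co:diagonalCI} for the complete intersection case; Proposition~\ref{pr:initial-ideal}(1) for the linear-resolution case; Lemma~\ref{le:projdim3} for $\pdim_Q R\le 3$ once $\beta_{2,4}\le 2$; and Proposition~\ref{pr:initial-ideal}(2) to get $\beta_{3,4}=0$. But where you invoke Proposition~\ref{pr:betti-deviations}, Theorem~\ref{th:aci}, and ``consecutive cancellation'' to finish, the paper instead observes $\beta_{3,6}=0$ (Corollary~\ref{Co:diagonalCI} again, since $R$ is not a complete intersection), so $\beta_3=\beta_{3,5}$, and then appeals to Boij--S\"oderberg theory to certify that only the two middle tables survive. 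Your invocation of Theorem~\ref{th:aci} is misplaced: it gives $\ee_3\le\ee_4$ only when $R$ is an almost complete intersection, which is not known at this stage, and in any case that inequality does not directly pin down $\beta_{3,5}$.
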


\begin{proof}
We write $\beta_{i,j}$ for $\beta^{Q}_{i,j}(R)$. We remark that if $R$ is a complete intersection there is little to prove, so we assume that it is not. 

\medskip

(1)$\Rightarrow$(2): This is a tautology.

\medskip

(2)$\Rightarrow$(3): When (2) holds, Remark \ref{re:ACI} and Corollary \ref{Co:diagonalCI} show that $\beta_{2,4} \leq 2$ and $\beta_{2,j} = 0$ for $j>4$.  Now Lemma \ref{le:projdim3} guarantees that the projective dimension of $R$ is at most $3$.  The proof of Proposition~\ref{pr:initial-ideal}(1) shows that if $\beta_{2,3} = 3$ then the Betti table is the last one in the list above.  If $\beta_{2,3}\leq 2$ then Proposition \ref{pr:initial-ideal} (2) yields $\beta_{3,4} = 0$. Finally $\beta_{3,6} = 0$ by Corollary \ref{Co:diagonalCI}, as $R$ is not a complete intersection.  Hence $\beta_3 = \beta_{3,5}$.   The inequalities
\[
\beta_{2,3} \le 2\,\quad\text{and}\quad  \beta_{2,4} \le 2
\]
allow  few possibilities for Betti tables.  A computation using Boij-S\"oderberg theory \cite{BoijSoed} confirms that the only options are the middle two Betti tables in Figure (1).

\medskip

(3)$\Rightarrow$(1):  Evidently, if (3) holds $R$ is quadratic and $\pdim_{Q}R\le 3$. Thus, one can pass to a quotient of $R$ by a regular sequence of linear forms and assume that $\edim R \le 3$.  It then follows from Remark~\ref{re:DAli} that $R$ must be Koszul.

\medskip

(1)$\Rightarrow$(4): We have already verified that (1)$\Rightarrow$(3); the desired implication thus follows from  an inspection of the Betti tables in Figure 1, and Remark~\ref{re:thick-diagonal}.

\medskip

(4)$\Rightarrow$(1): Extending the field $k$, we can assume it is algebraically closed. We first prove that the projective dimension of $R$ must be at most $3$. 

Recall from the proof of Proposition~\ref{pr:initial-ideal}(1) that if $\beta_{2,3} = 3$ then $R$ has a linear resolution of length three.  Since $H_{1,2}(R)$ generates the diagonal by assumption, we know that $\beta_{2,4}\leq 3$.  Assume $\pdim_{Q}R\ge 4$. Then the Syzygy Theorem~\cite{BH}*{Theorem 9.5.6} implies $\beta_2 = 5$ so that $\beta_{2,3} = 2$ and $\beta_{2,4} = 3$.  Let $J$ be the initial ideal of $I$ with respect to some term order. By Proposition \ref{pr:initial-ideal} (5), the ideal $J$ has one cubic generator so the Betti table of $Q/J$ is 
\[
\begin{tabular}{c|clclcl}
&0&1&2&3\\
\hline
0&1&--&--&--\\
1&--&3&3&1 \\ 
2&--&1&$\gamma$ &?  \\
3&--& ? & ?  & ? \\
\end{tabular}.
\]
Since $\beta_{3,4}(R) = 0$, by Proposition \ref{pr:initial-ideal} (2), we get that $\gamma \geq 4$, so the $1$ in this table can cancel.  This contradictions \ref{pr:initial-ideal}(4). This completes the proof that $\pdim_{Q}R\le 3$.

Given this, we can reduce to the case $\edim R = 3$, as in the proof of (3)$\Rightarrow$(1).  Now again we apply Remark~\ref{re:DAli}: if $R$ is not Koszul, then it is clear from the Betti table in \emph{op.~cit.} that $H_*(K^R)$ is not generated by its linear strand.
\end{proof}

The implication (1)$\Rightarrow$(3) in the theorem above settles Question~\ref{qu:conca} when $g\le 3$. It is worth noting that the proof of that implication does not use Remark~\ref{re:DAli}.

\begin{corollary}
\label{co:conca}
\pushQED{\qed}
If $R$ is Koszul and $\beta_1^Q(R)\leq 3$, then
\[
\beta_i^Q(R) \leq \binom{\beta_1^Q(R)}i\quad\text{for each $i\ge 0$.} \hbox{\hfill \qed}
\]
\end{corollary}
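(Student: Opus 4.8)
The plan is to deduce Corollary~\ref{co:conca} directly from the theorem that classifies the Betti tables of Koszul algebras defined by three relations, after first reducing to the case where $I$ is generated by quadrics. For the reduction: since $R$ is Koszul, it is in particular quadratic, so $I$ is generated by forms of degree two; and since $\beta_1^Q(R)\le 3$, these are at most three quadrics. If $\beta_1^Q(R)\le 2$ the assertion is classical (a Koszul algebra defined by at most two relations is a complete intersection, so its $Q$-resolution is a Koszul complex and $\beta_i^Q(R)=\binom{\beta_1^Q(R)}{i}$), so the substance is the case $g=3$.

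For $g=3$, I would invoke the implication (1)$\Rightarrow$(3) of the theorem: the Betti table of $R$ over $Q$ must be one of the four tables displayed in Figure~1. Then one simply reads off, table by table, that $\beta_i^Q(R)\le\binom 3i$ for every $i$: for the complete intersection table the Betti numbers are exactly $1,3,3,1$; for the table of $k[x,y,z]/(x^2,y^2,xz)$ they are $1,4,3$, with $4\le\binom 32=3$... which fails, so in fact I must be more careful and note that here $g=\beta_1^Q(R)=4$ rather than $3$ — wait, that table has a $3$ and a $1$ in homological degree $1$, i.e. $\beta_1^Q(R)=3$ still, and $\beta_2^Q(R)=3\le 3$, $\beta_3^Q(R)=1\le 1$; similarly the $3\times 2$ matrix-of-minors table gives $1,3,2$ and the linear-resolution table gives $1,3,3,1$. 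In every case $\beta_i^Q(R)\le\binom 3i$, and in particular $\pdim_Q R\le 3$.

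The one genuine point requiring care — and the step I expect to be the main obstacle in writing this cleanly — is making the reduction from "Koszul, $\beta_1^Q(R)\le 3$" to "defined by three quadratic relations" airtight, together with handling the degenerate cases $g\le 2$ separately so that the binomial coefficient $\binom gi$ is the right one in each regime. Once the Betti table is pinned down by the theorem, the corollary is just an inspection, with no further homological input needed. I would therefore present the proof in two short moves: first dispose of $g\le 2$ (complete intersection), then apply (1)$\Rightarrow$(3) of the theorem and inspect Figure~1. (This is why the statement in the excerpt is given with a bare \qed: the argument is a one-line consequence of the preceding theorem, and the reader is expected to supply the inspection.)
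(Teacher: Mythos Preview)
Your overall approach matches the paper's: the corollary is recorded with a bare \qed\ precisely because it is an immediate consequence of the implication (1)$\Rightarrow$(3) in the preceding theorem, plus an inspection of Figure~1 for the case $g=3$. Your reading of the four tables, once you straighten it out, is correct: in each of them $\beta^{Q}_{i}(R)\le\binom{3}{i}$.

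There is, however, a genuine error in your handling of the degenerate cases. The assertion ``a Koszul algebra defined by at most two relations is a complete intersection'' is false. Take $R=k[x,y]/(x^{2},xy)$: this is a quadratic monomial ring, hence G-quadratic and Koszul, yet $I=x(x,y)$ has height one, so $R$ is not a complete intersection. What \emph{is} true, and is all you need, is that for any ideal minimally generated by two forms $f_{1},f_{2}$ in a polynomial ring, the minimal free resolution has the shape $0\to Q\to Q^{2}\to Q$: writing $h=\gcd(f_{1},f_{2})$ and $f_{i}=hg_{i}$, the syzygy module is generated by $(g_{2},-g_{1})$, so $\beta^{Q}_{i}(R)=\binom{2}{i}$ regardless of whether the sequence is regular. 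The case $g\le 1$ is of course trivial. Replace your complete-intersection claim with this elementary observation and the reduction goes through; the rest of the argument is fine, though you should clean up the stream-of-consciousness in the table inspection before writing it down.
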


To wrap up, we note that the results in this work, combined with those available in the literature, settle Question~\ref{qu:conca} also for algebras of embedding dimension at most three; that is to say, for algebras with at most three generators. 

\begin{remark}
\label{re:conca}
Assume $\rank_{k}(R_{1})\leq 3$ so that $\rank_{k}(R_2) \leq 6$.   A theorem of Backelin and Fr\"oberg \cite{BF}*{Theorem 4.8} shows that if $\rank_{k}(R_2) \leq 2$, the ring $R$ is Koszul, and then Conca~\cite{C2000} proves that $R$ is LG-quadratic, with essentially one exception, and observes in \cite{C2009} that that too is LG-quadratic. The inequality in Question~\ref{qu:conca} follows; see Remark~\ref{re:Taylor}. When  $\rank_{k}(R_{2})\geq 3$ and $R$ is Koszul, Corollary~\ref{co:conca} leads to the same conclusion.
\end{remark}

\begin{ack}
It is a pleasure to thank the referee for a careful reading of the manuscript. The second author was supported by a Post-Doctoral fellowship from CNPq-Brazil. He would like to thank CNPq for the support, and the university of Utah for the hospitality during the time of the preparation this work. The third author was partly supported by  NSF grant DMS-1503044.
\end{ack}

\begin{bibdiv}
  \begin{biblist}

\bib{Ao}{article}{
   author={Aoyama, Y{\^o}ichi},
   title={On the depth and the projective dimension of the canonical module},
   journal={Japan. J. Math. (N.S.)},
   volume={6},
   date={1980},
   number={1},
   pages={61--66},
}
	
\bib{Av1}{article}{
   author={Avramov, Luchezar L.},
   title={Infinite free resolutions [MR1648664]},
   conference={
      title={Six lectures on commutative algebra}, },
   book={
      series={Mod. Birkh\"auser Class.},
      publisher={Birkh\"auser Verlag, Basel}, },
   date={2010},
   pages={1--118},
}

\bib{ACI1}{article}{
   author={Avramov, Luchezar L.},
   author={Conca, Aldo},
   author={Iyengar, Srikanth B.},
   title={Free resolutions over commutative Koszul algebras},
   journal={Math. Res. Lett.},
   volume={17},
   date={2010},
   number={2},
   pages={197--210},
}

\bib{ACI}{article}{
   author={Avramov, Luchezar L.},
   author={Conca, Aldo},
   author={Iyengar, Srikanth B.},
   title={Subadditivity of syzygies of Koszul algebras},
   journal={Math. Ann.},
   volume={361},
   date={2015},
   number={1-2},
   pages={511--534},
}

\bib{BThesis}{article}{
	author = {Backelin, J.},
	title ={Relations between rates of growth of homologies},
	journal={Research Reports in Mathematics},
	number={25, Department of Mathematics},
	pages={Stockholm University},
	date={1988},
}

\bib{BF}{article}{
   author={Backelin, J{\"o}rgen},
   author={Fr{\"o}berg, Ralf},
   title={Koszul algebras, Veronese subrings and rings with linear
   resolutions},
   journal={Rev. Roumaine Math. Pures Appl.},
   volume={30},
   date={1985},
   number={2},
   pages={85--97},
}

\bib{BDGMS}{article}{
   author={Boocher, Adam},
   author={D'Al{\`{\i}}, Alessio},
   author={Grifo, Elo{\'{\i}}sa},
   author={Monta{\~n}o, Jonathan},
   author={Sammartano, Alessio},
   title={Edge ideals and DG algebra resolutions},
   journal={Matematiche (Catania)},
   volume={70},
   date={2015},
   number={1},
   pages={215--238},
}	

\bib{BoijSoed}{article}{
   author={Eisenbud, David},
   author={Schreyer, Frank-Olaf},
   title={Betti numbers of graded modules and cohomology of vector bundles},
   journal={J. Amer. Math. Soc.},
   volume={22},
   date={2009},
   number={3},
   pages={859--888},
}

\bib{Br}{article}{
   author={Bruns, Winfried},
   title={``Jede'' endliche freie Aufl\"osung ist freie Aufl\"osung eines
   von drei Elementen erzeugten Ideals},
   journal={J. Algebra},
   volume={39},
   date={1976},
   number={2},
   pages={429--439},
}

\bib{Br2}{article}{
   author={Bruns, Winfried},
   title={On the Koszul algebra of a local ring},
   journal={Illinois J. Math.},
   volume={37},
   date={1993},
   number={2},
   pages={278--283},
}

\bib{BH}{book}{
   author={Bruns, Winfried},
   author={Herzog, J{\"u}rgen},
   title={Cohen-Macaulay rings},
   series={Cambridge Studies in Advanced Mathematics},
   volume={39},
   publisher={Cambridge University Press, Cambridge},
   date={1993},
   pages={xii+403},
}

\bib{C2000}{article}{
   author={Conca, Aldo},
   title={Gr\"obner bases for spaces of quadrics of low codimension},
   journal={Adv. in Appl. Math.},
   volume={24},
   date={2000},
   number={2},
   pages={111--124},
   issn={0196-8858},
}

\bib{C2009}{article}{
   author={Conca, Aldo},
   title={Koszul algebras and Gr\"obner bases of quadrics},
   date={2009},
   status={preprint},
   eprint={https://arxiv.org/abs/0903.2397},
}

\bib{Co}{article}{
   author={Conca, Aldo},
   title={Koszul algebras and their syzygies},
   conference={
      title={Combinatorial algebraic geometry},
   },
   book={
      series={Lecture Notes in Math.},
      volume={2108},
      publisher={Springer, Cham},
   },
   date={2014},
   pages={1--31},
}

\bib{Ali}{article}{
   author={D'Al\`i, Alessio},
   title={The Koszul property for spaces of quadrics of codimension three},
   date={2016},
   status={preprint},
   eprint={https://arxiv.org/abs/1605.09145}
}

\bib{HN}{article}{
   author={Hassanzadeh, Seyed Hamid},
   author={Na{\'e}liton, Jose},
   title={Residual intersections and the annihilator of Koszul homologies},
   journal={Algebra Number Theory},
   volume={10},
   date={2016},
   number={4},
   pages={737--770},
}

\bib{JL}{article}{
   author={Jorgensen, David A.},
   author={Leuschke, Graham J.},
   title={On the growth of the Betti sequence of the canonical module},
   journal={Math. Z.},
   volume={256},
   date={2007},
   number={3},
   pages={647--659},
}

\bib{MR2434477}{article}{
   author={Jorgensen, David A.},
   author={Leuschke, Graham J.},
   title={Erratum: ``On the growth of the Betti sequence of the canonical
   module'' [Math. Z. {\bf 256} (2007), no. 3, 647--659; MR 2299575]},
   journal={Math. Z.},
   volume={260},
   date={2008},
   number={3},
   pages={713--715},
}

\bib{Ke}{article}{
   author={Kempf, George R.},
   title={Some wonderful rings in algebraic geometry},
   journal={J. Algebra},
   volume={134},
   date={1990},
   number={1},
   pages={222--224},
}

\bib{Ku}{article}
{
   author={Kunz, Ernst},
   title={Almost complete intersections are not Gorenstein rings},
   journal={J. Algebra},
   volume={28},
   date={1974},
   pages={111--115},
   review={\MR{0330158}},
}

\bib{MS}{article}{
   author={McCullough, Jason},
   author={Seceleanu, Alexandra},
   title={Bounding projective dimension},
   conference={
      title={Commutative algebra},
   },
   book={
      publisher={Springer, New York},
   },
   date={2013},
   pages={551--576},
}

\bib{Pe}{article}{  
author={Peeva, Irena},
   title={Consecutive cancellations in Betti numbers},
   journal={Proc. Amer. Math. Soc.},
   volume={132},
   date={2004},
   number={12},
   pages={3503--3507},
}

\bib{Pe2}{book}{
   author={Peeva, Irena},
   title={Graded syzygies},
   series={Algebra and Applications},
   volume={14},
   publisher={Springer-Verlag London, Ltd., London},
   date={2011},
   pages={xii+302},
}

\bib{Roos}{article}{
   author={Roos, Jan-Erik},
   title={Commutative non-Koszul algebras having a linear resolution of
   arbitrarily high order. Applications to torsion in loop space homology},
   language={English, with English and French summaries},
   journal={C. R. Acad. Sci. Paris S\'er. I Math.},
   volume={316},
   date={1993},
   number={11},
   pages={1123--1128},
   issn={0764-4442},
   review={\MR{1221635}},
}
\end{biblist}
\end{bibdiv}

\end{document}